\providecommand{\rset}[1]{\mathbb{R}^}
\providecommand{\abs}[1]{\lvert#1\rvert}
\providecommand{\norm}[1]{\lVert#1\rVert}
\newcommand{\lu}{\mathcal{L}}
\newcommand{\calK}{\mathcal{K}}
\newcommand{\ein}{\epsilon_{\text{in}}}
\journalname{Mathematical Programming Computation}
\begin{document}

\title{DuQuad: an inexact (augmented) dual first order algorithm
for quadratic programming}

\author{ Ion Necoara \and  Andrei Patrascu}

\institute{I. Necoara and A. Patrascu \at
Automatic Control and Systems Engineering Department, University Politehnica Bucharest, 060042 Bucharest, Romania\\
Tel.: +40-21-9195, Fax: +40-21-9195\\
\email{ion.necoara@acse.pub.ro}
}

\date{Received: April 2015 / Accepted: date}

\maketitle

\begin{abstract}
In this paper we present the solver DuQuad  specialized for solving
general convex quadratic problems arising in many engineering
applications. When it is difficult to project on the primal feasible
set, we use the (augmented) Lagrangian relaxation to handle the complicated
constraints and then, we apply dual first order algorithms based on
inexact dual gradient information for solving the corresponding dual
problem. The iteration complexity analysis is based on two types of
approximate primal solutions:  the primal last iterate and an
average of primal iterates. We provide computational complexity
estimates on the primal suboptimality and feasibility violation of
the generated approximate primal solutions. Then, these algorithms
are implemented in the programming language C in DuQuad, and
optimized for low iteration complexity and low memory footprint.
DuQuad has a dynamic Matlab interface which make the process of
testing, comparing, and analyzing the algorithms simple. The
algorithms are implemented using only basic arithmetic and logical
operations and are suitable to run on low cost hardware. It is shown
that if an approximate solution is sufficient for a given
application, there exists problems where some of the implemented
algorithms obtain the solution faster than state-of-the-art
commercial solvers.

\keywords{Convex quadratic programming \and (augmented) dual relaxation \and
first  order algorithms, \and rate of convergence \and arithmetic
complexity.}
\end{abstract}


\section{Introduction}
\noindent Nowadays, many engineering applications can be  posed as
convex  quadratic  problems (QP).  Several important applications
that can be modeled in this framework such us model predictive
control for a dynamical linear system
\cite{DomZgr:12,NecNed:13,NedNec:12,PatBem:12,StaSzu:14} and its
dual called  moving horizon estimation \cite{FraVuk:14}, DC optimal
power flow problem for a power system \cite{ZimMur:11}, linear
inverse problems arising in many branches of science
\cite{BecTeb:09,WanLin:13}  or  network utility maximization
problems \cite{WeiOzd:10} have attracted great attention lately.
Since the computational power has increased by many orders in the
last decade, highly efficient and reliable  numerical optimization
algorithms  have been developed for solving the optimization
problems arising from these applications in very short time. For
example, these hardware and numerical recent advances made it
possible to solve linear predictive control problems of nontrivial
sizes within the range of microseconds and  even on  hardware
platforms with limited computational power and memory
\cite{JerLin:12}.

\noindent  The theoretical foundation of quadratic programming dates
back to the work by Frank \& Wolfe \cite{FraWol:56}. After the
publication of the paper \cite{FraWol:56} many numerical algorithms
have been  developed in the literature that exploit efficiently the
structure arising in this class of problems. Basically,  we can
identify three popular classes of algorithms to solve quadratic
programs: active set methods, interior point methods and (dual)
first order methods.

\noindent \textit{Active set methods} are based on the observation
that  quadratic problems with equality constraints are equivalent to
solving a linear system. Thus,  the iterations in these methods are
based on solving a linear system and updating the active set (the
term active set refers to the subset of constraints that are
satisfied as equalities by the current estimate of the solution).
Active set general purpose solvers are adequate for small-to-medium
scale quadratic problems, since the numerical complexity per
iteration  is cubic in the dimension of the problem.  Matlab's
\textit{quadprog} function implements a primal active set method.
Dual active set methods are available in the codes
\cite{BarBie:06,FerKir:14}.

\noindent \textit{Interior point methods} remove the inequality
constraints  from the problem formulation using a barrier term  in
the objective function for penalizing the constraint violations.
Usually a logarithmic barrier terms is used and the resulting
equality constrained nonlinear convex problem is solved by the
Newton method. Since the iteration complexity grows also cubically
with the dimension, interior-point solvers are also the standard for
small-to-medium scale QPs. However,  structure exploiting interior
point solvers have been also  developed for particular  large-scale
applications: e.g. several solvers exploit the sparse structure of
the quadratic problem arising in predictive control (CVXGEN
\cite{MatBoy:09}, FORCES \cite{DomZgr:12}). A parallel interior
point code that exploits special structures in the Hessian of
large-scale structured  quadratic programs have been developed
in~\cite{GonGro:07}.

\noindent \textit{First order methods} use only gradient information
at  each iterate by computing a step towards the solution of the
unconstrained problem and then projecting this step onto the
feasible set. Augmented Lagrangian algorithms for solving general nonconvex
problems are presented  in the software package Lancelot \cite{ConGou:92}.
For convex QPs with simple constraints  we can use primal first order
methods for solving the quadratic program as in \cite{Ulm:11}. In this case the main computational effort per iteration  consists of a matrix-vector product. When the projection on the primal feasible set is hard to compute, an alternative to
primal first order methods is to use the Lagrangian relaxation to
handle the complicated constraints and then to apply dual first
order algorithms for solving the dual. The computational complexity
certification of first order methods for solving the (augmented)
Lagrangian dual of general  convex problems is studied e.g. in
\cite{LanMon:08,NecNed:13,NedNec:12,NecFer:15,NedOzd:09} and of
quadratic  problems is studied in \cite{Gis:14,PatBem:12,StaSzu:14}.
In these methods the main computational effort consists of solving
at each iteration a Lagrangian QP problem  with simple constraints
for a given multiplier, which allows us to determine the value of
the dual gradient for that multiplier,  and then update the dual
variables using matrix-vector products.   For example, the toolbox
FiOrdOs \cite{Ulm:11} auto-generates code for primal or dual fast
gradient methods  as proposed in \cite{RicMor:11}. The  algorithm in
\cite{PatBem:12} dualizes only the inequality constraints of the QP
and assumes available a solver for linear systems  that is able to
solve the Lagrangian inner problem. However,  both implementations
\cite{Ulm:11,PatBem:12} do not consider the important aspect  that
the Lagrangian inner problem cannot be solved exactly in practice.
The effect of inexact computations in dual gradient values on the
convergence of dual first order methods  has been analyzed in detail
in \cite{NecNed:13,NedNec:12,NecFer:15}. Moreover,  most of these
papers generate approximate primal solutions through averaging
\cite{NecNed:13,NedNec:12,NedOzd:09}. On the other hand, in practice
usually the last primal  iterate is employed, since in practice
these methods converge faster in the primal last iterate than in a primal average
sequence. These issues motivate our work~here.

\noindent \textit{Contributions}. In this paper we analyze the
computational complexity of several (augmented) dual first order methods
implemented in DuQuad for solving convex quadratic problems.
Contrary to most of the results from the literature
\cite{NedOzd:09,PatBem:12}, our approach allows us to use inexact
dual gradient information (i.e. it allows to solve the (augmented)
Lagrangian inner problem approximately) and therefore is able to tackle more
general quadratic convex problems and to solve practical
applications. Another important feature of our approach is that we
provide also complexity results for the primal latest iterate, while
in much of the previous literature convergence rates in an average
of primal iterates are given \cite{NecNed:13,NedNec:12,NedOzd:09,PatBem:12}.
We derive in a unified framework the computational complexity of the dual and augmented dual (fast) gradient methods in terms of primal suboptimality and feasibility violation using inexact dual gradients and two types of approximate primal solutions: the last primal iterate and  an average of  primal iterates. From our knowledge this paper
is the first where both approaches, dual and augmented dual first order methods,
are analyzed  uniformly.  These algorithms are also implemented in the efficient programming language C in DuQuad, and optimized for low iteration
complexity and low memory footprint. The toolbox has a dynamic Matlab
interface which make the process of testing, comparing, and analyzing the algorithms simple. The algorithms are implemented using only basic arithmetic and logical
operations and thus are suitable to run on low cost hardware. The main
computational bottleneck in the methods implemented in DuQuad is the
matrix-vector product. Therefore, this toolbox can be used for
solving either QPs on hardware with limited resources or sparse QPs with
large dimension.

\noindent  \textit{Contents}. The paper is organized as follows. In
section \ref{sec_pf} we describe the optimization problem that we
solve in DuQuad. In Section \ref{sec_duquad} we describe the the
main theoretical aspects that DuQuad is based on, while in Section
\ref{numerical_tests} we present some numerical results obtained
with DuQuad.

\noindent \textit{Notation}. For $x,y \in \rset^n$ denote the scalar
product by $\langle x,y \rangle = x^T y$ and the  Euclidean norm by
$\|x\|=\sqrt{x^T x}$. Further, $[u]_X$ denotes the projection of $u$
onto convex set $X$ and $\text{dist}_{X}(u) =\|u -[u]_X\|$ its
distance. For a matrix $G \in \rset^{m \times n}$ we use the
notation $\|G\|$ for the spectral norm.


\section{Problem formulation}
\label{sec_pf} \noindent In DuQuad  we consider a general convex
quadratic problem (QP) in the form:
\begin{align}\label{original_primal}
F^* = & \min_{u \in U} F(u) \quad \left(=
\frac{1}{2} u^T Q u + q^T u\right) \\
& \text{s.t:} \;\;\; G u + g \in \calK, \nonumber
\end{align}
where $F:\rset^n \rightarrow \rset^{}$ is a convex quadratic
function  with the Hessian $Q \succeq 0$, $G \in \rset^{p
\times n}$, $U \subseteq \rset^n$ is a simple compact  convex set,
i.e. a box $U= [\text{lb} \; \text{ub}]$,   and ${\calK}$ is
either the cone $\rset^p_{-}$ or the cone $\{0\}$. Note that our
formulation allows to incorporate in the QP either linear inequality
constraints $\calK=\rset^p_{-}$ (arising e.g. in sparse formulation
of predictive control and network utility maximization) or linear
equality constraints $\calK=\{0\}$ (arising  e.g. in condensed
formulation of predictive control and DC optimal power flow). In
fact the user can define linear constraints of the form:
$\bar{\text{lb}} \leq \bar{G} u+ \bar{g} \leq
\bar{\text{ub}}$ and depending on the values for
$\bar{\text{lb}}$ and $\bar{\text{ub}}$ we have linear
inequalities or equalities.  Throughout the paper  we assume that there
exists a finite optimal Lagrange multiplier $\lambda^*$ for the QP
\eqref{original_primal} and it is difficult to project on the
feasible set of problem~\eqref{original_primal}: $${\cal X}_{QP} =
\{u \in U: \; G u+ g \in \calK\}.$$ Therefore,
solving the primal problem \eqref{original_primal} approximately
with primal first order methods  is numerically difficult and thus
we usually use (augmented) dual first order  methods for finding an approximate
solution for \eqref{original_primal}.  By moving the complicating
constraints $G u+ g \in \calK$ into the cost via
Lagrange multipliers we define the (augmented) dual function:
\begin{align}
\label{dual_fc} d_\rho(\lambda) = \min_{u \in U} \mathcal{L}_\rho(u,\lambda),
\end{align}
where $\mathcal{L}_\rho(u,\lambda)$ denotes the (augmented)
Lagrangian w.r.t.~the complicating constraints $G u +g \in
\calK$, i.e.:
\begin{equation}
\label{auglag}
 \mathcal{L}_\rho(u,\lambda)  = \min\limits_{s \in \calK} \; F(u) +
 \langle \lambda, G u + g - s \rangle + \frac{\rho}{2}\norm{Gu + g - s}^2
\end{equation}
where the regularization parameter  $\rho \geq 0$. We denote
$s(u,\lambda) = \arg\min\limits_{s \in \calK} \langle \lambda, G u + g -s \rangle + \frac{\rho}{2}\norm{Gu+g - s}^2$ and
observe that:
\begin{equation*}
 s(u,\lambda) = \begin{cases} \left[ Gu+g + \frac{1}{\rho}\lambda \right]_{\calK} & \text{if} \;\; \rho>0 \\
                 0 &\text{if} \;\; \rho=0.
                \end{cases}
\end{equation*}
Using this observation in the formulation \eqref{auglag}, we obtain:
\begin{align}
\label{auglag1}
 \mathcal{L}_{\rho}(u, \lambda) = \begin{cases}
  F(u) +  \frac{\rho}{2} \text{dist}_{\mathcal{K}} \left( Gu+g + \frac{1}{\rho} \lambda \right)^2 - \frac{1}{2\rho}\norm{\lambda}^2,
  & \text{if} \; \rho > 0\\
  F(u) +  \langle \lambda, G u + g \rangle,
  & \text{if}\; \rho = 0.
\end{cases}
\end{align}
In order to tackle general convex quadratic programs, in DuQuad we consider the following two options:

\noindent \textbf{Case 1}:  if $Q \succ 0$, i.e. $Q$ has the smallest eigenvalue
$\lambda_{\min}(Q) >0$, then we consider $\rho=0$ and recover the ordinary Lagrangian function.

\noindent \textbf{Case 2}:  if  $Q \succeq 0$, i.e.
$Q$ has the smallest eigenvalue $\lambda_{\min}(Q) =0$,
then we consider $\rho >0$ and recover the augmented Lagrangian function.

\noindent Our formulation of the (augmented) Lagrangian
\eqref{auglag1}  and the previous two cases allow us to thereat in a
unified framework both approaches, dual and augmented dual first
order methods, for general convex QPs.   We  denote by $u(\lambda)$
the optimal solution  of the \textit{inner problem} with simple
constraints $u \in U$:
\begin{align}
\label{ul} u(\lambda)  = \arg \min_{u \in U}
\mathcal{L}_\rho(u,\lambda).
\end{align}
Note that  for both cases described above the
(augmented) dual function is differentiable everywhere. Moreover,  the gradient
of the (augmented) dual function $d_\rho(\lambda)$ is $L_{\text{d}}$-Lipschitz
continuous  and given by \cite{NecNed:13,NedNec:12,Nes:04,Roc:76}:
\begin{equation}
\label{Ld}
\nabla d_{\rho} (\lambda) = Gu(\lambda) + g - s(u(\lambda),\lambda) \;\; \text{and} \;\; L_{\text{d}}= \frac{\|G\|^2}{\lambda_{\min}(Q) +
\rho\|G\|^2}
\end{equation}
for all $\lambda \in \rset^p$. Since the dual function has  Lipschitz continuous  gradient, we can
derive bounds on $d_\rho$ in terms of a linear and a quadratic model
(the  so-called \textit{descent lemma}) \cite{Nes:04}:
\begin{align}
\label{descentlemma} 0 \leq   [d_\rho(\mu) +
\langle {\nabla} d_\rho(\mu),\lambda - \mu \rangle] - d_\rho(\lambda) \leq
\frac{L_{\text{d}}}{2} \|\mu - \lambda\|^2  \quad \forall \lambda,
\mu \in  \rset^p.
\end{align}
Descent lemma is essential in proving convergence rate for first
order methods \cite{Nes:04}.    Since we assume the existence of a finite optimal Lagrange multiplier for \eqref{original_primal}, strong duality holds and thus the \textit{outer problem} is smooth and satisfies:
\begin{align}
\label{dual_pr} F^* = \max_{\lambda \in \mathcal{K}_d} d_\rho(\lambda),
\end{align}
where
$$\mathcal{K}_d = \begin{cases} \rset^p_+, & \text{if} \;\; Q \succ 0 \;\; \text{and} \;\; \mathcal{K} = \rset^p_{-} \\
\rset^p, & \;\; \text{otherwise}. \end{cases}$$ Note that, in
general,  the smooth (augmented) dual problem \eqref{dual_pr} is not
a QP,  but has simple constraints. We denote a primal optimal
solution by $u^*$ and a dual optimal solution by  $\lambda^*$. We
introduce $\Lambda^* \subseteq \calK_d$ as the set of optimal
solutions of the smooth dual problem \eqref{dual_pr} and define  for
some $\lambda^0 \in \rset^p$ the following finite quantity:
\begin{equation}
\label{eq_multipleirs_bounded} \mathcal{R}_{\text{d}} =
\min \limits_{\lambda^* \in \Lambda^*} \|\lambda^* - \lambda^0\|.
\end{equation}
In the next section we
present a general first order algorithm for convex optimization with
simple constraints that is used frequently in our toolbox.


\subsection{First order methods}
\noindent In this section we present a framework for  first order
methods generating an approximate  solution for a smooth convex
problem with simple constraints in the form:
\begin{equation}
\label{aux_prob}
\phi^* = \min_{x \in X} \; \phi(x),
\end{equation}
where $\phi: \rset^n \to \rset^{}$ is a convex function   and $X$ is
a simple convex set (i.e. the projection on this set is easy).
Additionally, we assume that $\phi$ has  Lipschitz continuous
gradient  with constant $L_{\phi} >0$ and is strongly convex with
constant $\sigma_{\phi} \geq 0$.     This general framework covers
important particular algorithms \cite{Nes:04,BecTeb:09}: e.g.
gradient algorithm,  fast gradient algorithm for smooth problems, or
fast gradient algorithm for problems with smooth and strongly convex
objective function. Thus, we will analyze the iteration complexity
of  the following general  first order method that updates two
sequences $(x^k, y^k)_{k \geq 0}$  as follows:
\begin{center}
\framebox{
\parbox{7cm}{
\begin{center}
\textbf{ Algorithm {\bf FOM ($\phi,X$)} }
\end{center}
{Given $x^0 = y^1 \in X$, for $k\geq 1$ compute:}
\begin{enumerate}
\item ${x}^{k}= \left[ y^k - \frac{1}{L_{\phi}} \nabla \phi(y^k)\right]_X$,
\item $y^{k+1} = x^k + \beta_k (x^k - x^{k-1})$,
\end{enumerate}
}}
\end{center}
where $\beta_k$ is the parameter  of the method and we  choose it in
an appropriate way depending on the properties of function $\phi$.
More precisely, $\beta_k$ can be updated as follows:

\noindent \textbf{GM}: in the Gradient Method $\beta_k=
\frac{\theta_k -1 }{\theta_{k+1}}$, where $\theta_k=1$ for all $k$.
This is equivalent with  $\beta_k=0$ for all $k$. In this case
$y^{k+1} = x^k$ and thus we have the classical gradient update:
${x}^{k+1}= [ x^k - \frac{1}{L_{\phi}} \nabla \phi(x^k)]_X$.

\noindent \textbf{FGM}: in the Fast Gradient Method for smooth
convex problems $\beta_k=\frac{\theta_k -1 }{\theta_{k+1}}$, where
$\theta_{k+1} = \frac{1 + \sqrt{1 + 4 \theta_k^2 }}{2}$ and
$\theta_1=1$. In this case we get a particular version of Nesterov's
accelerated scheme \cite{Nes:04} that updates two sequences
$(x^{k},y^k)$ and has been analyzed in detail in \cite{BecTeb:09}.

\noindent \textbf{FGM}$_\sigma$: in fast gradient algorithm for
smooth convex problems with strongly convex objective function, with
constant $\sigma_{\phi} > 0$, we choose
$\beta_k=\frac{\sqrt{L_{\phi}} -
\sqrt{\sigma_{\phi}}}{\sqrt{L_{\phi}} + \sqrt{\sigma_{\phi}}}$ for
all $k$. In this case we get a particular version of Nesterov's
accelerated scheme \cite{Nes:04} that  also updates two sequences
$(x^{k},y^k)$.

\noindent The  convergence rate of Algorithm {\bf FOM}($\phi,X$) in
terms of function values is given in the next lemma:

\begin{lemma} \cite{BecTeb:09,Nes:04}
\label{lemma_sublin_dfg} For smooth convex problem \eqref{aux_prob}
assume that the objective  function  $\phi$ is strongly convex with
constant $\sigma_\phi \geq 0$ and  has Lipschitz continuous gradient
with constant $L_{\phi} >0$. Then,  the sequences $\left(x^k,
y^k\right)_{k\geq 0}$ generated by Algorithm {\bf FOM}($\phi,X$)
satisfy:
\begin{equation}
\label{bound_gen_dfg1} \phi(x^k) - \phi^* \!\leq\!  \min
\!\left(\!\! \left( \!1 \!-\!
\sqrt{\frac{\sigma_\phi}{L_\phi}}\right)^{k-1} \!\!\!\!\!\! L_{\phi}
\mathcal{R}^2_{\phi}, \frac{2 L_{\phi}
\mathcal{R}^2_{\phi}}{(k\!+\!1)^{p(\beta_k)}} \!\right)
\end{equation}
where $\mathcal{R}_{\phi}  = \min\limits_{x^*_{\phi} \in X^*_{\phi}}  \| x^0 - x^*_{\phi}\|$,
with  $X^*_{\phi}$  the optimal set of \eqref{aux_prob}, and $p(\beta_k)$ is defined as follows:
\begin{align}
\label{pbetak}
p(\beta_k) =
\begin{cases} 1 & \mbox{if } \beta_k = 0 \\
2 & \mbox{otherwise}.
\end{cases}
\end{align}
\end{lemma}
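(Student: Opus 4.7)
The plan is to establish the two bounds inside the minimum separately, the smaller of which then gives the claim. The common starting point is a three-point inequality for the projected gradient step: applying the descent lemma (the analogue of \eqref{descentlemma} for $\phi$) at $y^k$ and combining it with the first-order optimality condition of $x^k = [y^k - (1/L_\phi)\nabla\phi(y^k)]_X$ and (strong) convexity of $\phi$, one obtains, for every $z \in X$,
\[
\phi(x^k) \leq \phi(z) + L_\phi \langle y^k - x^k,\, x^k - z\rangle - \tfrac{L_\phi}{2}\|x^k - y^k\|^2 - \tfrac{\sigma_\phi}{2}\|z - y^k\|^2.
\]

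For the sublinear bound in the GM case ($\beta_k = 0$, so $y^k = x^{k-1}$), I would apply this inequality once with $z = x^*_\phi$ and once with $z = x^{k-1}$, telescope, and invoke convexity to obtain $\phi(x^k) - \phi^* \leq L_\phi \mathcal{R}_\phi^2/(2k)$, i.e.\ the $p(\beta_k) = 1$ bound. In the FGM case with $\beta_k = (\theta_k - 1)/\theta_{k+1}$, I would follow the FISTA argument of Beck--Teboulle: introduce the potential
\[
V_k = \tfrac{2}{L_\phi}\theta_k^2 \bigl(\phi(x^k) - \phi^*\bigr) + \bigl\|\theta_k x^k - (\theta_k - 1) x^{k-1} - x^*_\phi\bigr\|^2,
\]
use the three-point inequality at $z = x^*_\phi$ and $z = x^{k-1}$ together with the identity $\theta_{k+1}^2 - \theta_{k+1} = \theta_k^2$, and deduce $V_{k+1} \leq V_k$. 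Since $\theta_k \geq (k+1)/2$ by induction, minimizing over $x^*_\phi \in X^*_\phi$ yields the $p(\beta_k) = 2$ bound.

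For the linear bound, informative only in the FGM$_\sigma$ regime, I would use Nesterov's estimating-sequence technique with contraction factor $\alpha = \sqrt{\sigma_\phi/L_\phi}$: construct quadratic lower models $\phi_k(x) = \phi_k^* + (\sigma_\phi/2)\|x - v_k\|^2$ and maintain inductively the two invariants $\phi(x^k) \leq \phi_k^*$ and $\phi_k(x) \leq (1-\alpha)^k \phi_0(x) + (1 - (1-\alpha)^k)\phi(x)$. The constant momentum $\beta_k = (\sqrt{L_\phi}-\sqrt{\sigma_\phi})/(\sqrt{L_\phi}+\sqrt{\sigma_\phi})$ is chosen precisely so that the descent lemma applied at $y^{k+1}$ combined with strong convexity closes the induction. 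Evaluating the two invariants at $x = x^*_\phi$ then gives $\phi(x^k) - \phi^* \leq (1-\alpha)^{k-1} L_\phi \mathcal{R}_\phi^2$.

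The main technical point is the presence of the projection onto $X$: replacing $\nabla\phi(y^k)$ by the composite gradient mapping $L_\phi(y^k - x^k)$ wherever it appears preserves every inequality above in the constrained setting, so the arguments of \cite{BecTeb:09,Nes:04} apply verbatim. Taking the minimum of the two bounds completes \eqref{bound_gen_dfg1}.
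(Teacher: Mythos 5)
The paper offers no proof of this lemma; it is quoted directly from \cite{BecTeb:09,Nes:04}, and your plan reconstructs exactly the standard arguments from those references (three-point inequality for the projected gradient step, FISTA potential with $\theta_{k+1}^2-\theta_{k+1}=\theta_k^2$, estimating sequences for the strongly convex momentum), so the approach matches the intended one. One concrete slip to fix: your displayed three-point inequality has the wrong sign on the quadratic term. With the inner product written as $\langle y^k-x^k,\,x^k-z\rangle$ the correct statement is
\begin{equation*}
\phi(x^k)\;\le\;\phi(z)+L_\phi\langle y^k-x^k,\,x^k-z\rangle+\tfrac{L_\phi}{2}\|x^k-y^k\|^2-\tfrac{\sigma_\phi}{2}\|z-y^k\|^2,
\end{equation*}
equivalently $\phi(x^k)\le\phi(z)+L_\phi\langle y^k-x^k,\,y^k-z\rangle-\tfrac{L_\phi}{2}\|x^k-y^k\|^2-\tfrac{\sigma_\phi}{2}\|z-y^k\|^2$; the version you wrote (minus sign paired with $x^k-z$) is strictly stronger than the descent lemma allows and is false already for unconstrained quadratics. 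Since the telescoping, the FISTA potential argument, and the estimating-sequence induction all rely on the correctly signed form (which combines into $\tfrac{L_\phi}{2}(\|y^k-z\|^2-\|x^k-z\|^2)$), this is only a bookkeeping error and the rest of the plan goes through; note also, as you implicitly do, that the linear factor $(1-\sqrt{\sigma_\phi/L_\phi})^{k-1}$ is attained only by \textbf{FGM}$_\sigma$, not by plain \textbf{GM}.
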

Thus, Algorithm {\bf FOM} has linear convergence provided that
$\sigma_\phi>0$. Otherwise, it has sublinear convergence.


\section{Inexact (augmented) dual first order methods}
\label{sec_duquad} \noindent In this section we describe an inexact
dual (augmented) first order framework implemented in DuQuad,
a solver able to find an approximate solution for the quadratic program
\eqref{original_primal}. For a given accuracy $\epsilon >0$,
$u_\epsilon \in U$ is called an $\epsilon$-\textit{primal
solution} for problem \eqref{original_primal} if the following
inequalities hold:
\[ \text{dist}_{\calK} (G u_\epsilon + g) \leq {\cal O}(\epsilon)
\quad \text{and} \quad  |F(u_\epsilon) - F^*| \leq {\cal
O}(\epsilon).
\]

\noindent The main function in  DuQuad is the one implementing the
general Algorithm {\bf FOM}. Note that if the feasible set ${\cal
X}_{QP}$ of \eqref{original_primal} is simple, then we can call
directly  {\bf FOM}($F,{\cal X}_{QP}$) in order to obtain an
approximate solution for  \eqref{original_primal}. However, in
general the projection on ${\cal X}_{QP}$ is as difficult as solving
the original problem. In this case we resort to the (augmented) dual formulation
\eqref{dual_pr} for finding an $\epsilon$-primal solution for the
original  QP \eqref{original_primal}. The main idea in DuQuad is
based on the following observation:  from \eqref{ul}--\eqref{Ld} we
observe that for computing the gradient value of the dual function
in some multiplier $\lambda$, we need to solve exactly the inner
problem \eqref{ul}; despite the fact that, in some cases, the (augmented) Lagrangian
$\lu_\rho(u,\lambda)$ is quadratic and the feasible set $U$ is
simple in \eqref{ul}, this inner problem generally cannot be solved exactly.
Therefore, the main iteration in DuQuad consists of
two steps:

\vspace{0.2cm}

\noindent \textbf{Step 1}: for a given inner accuracy $\ein$ and a
multiplier $\mu \in \rset^p$ solve approximately  the inner problem
\eqref{ul} with accuracy $\ein$ to obtain an approximate solution
$\bar{u}(\mu)$ instead of the exact solution $u(\mu)$, i.e.:
\begin{align}
\label{duquad_in} 0 \le \lu_\rho(\bar{u}(\mu),\mu) - d_\rho(\mu) \leq \ein.
\end{align}
In DuQuad, we obtain an approximate solution $\bar{u}(\mu)$ using the
Algorithm {\bf FOM}($\lu_\rho(\cdot,\mu),U$). From, Lemma
\ref{lemma_sublin_dfg} we can estimate tightly the number of
iterations that we need to perform in order to get an
$\ein$-solution  $\bar{u}(\mu)$ for \eqref{ul}: the Lipschitz
constant is $L_{\lu} = \lambda_{\max} (Q) + \rho \|G\|^2$, the
strong convexity constant is $\sigma_{\lu} = \lambda_{\min} (Q +
\rho G^T G)$ (provided that e.g. $\calK=\{0\}$) and
${\cal R}_{\lu} \leq D_{U}$ (the diameter of
the box set $U$). Then, the number of iterations that we need to
perform for computing  $\bar{u}(\mu)$ satisfying \eqref{duquad_in}
can be obtained from  \eqref{bound_gen_dfg1}.

\vspace{0.2cm}

\noindent \textbf{Step 2}: Once an $\ein$-solution  $\bar{u}(\mu)$
for \eqref{ul} was found, we update at the outer stage the Lagrange
multipliers using again Algorithm {\bf FOM}($d_\rho,{\calK}_d$).
Note that for updating the Lagrange multipliers we use instead of
the true value of the dual gradient $\nabla d_\rho(\mu) = G
u(\mu) + g - s(u(\mu),\mu)$,
an approximate value given by: $\bar{\nabla} d_\rho(\mu) =
G \bar{u}(\mu) + g - s(\bar{u}(\mu),\mu)$.

\noindent In \cite{DevGli:14,NedNec:12,NecNed:13,NecFer:15} it has
been proved separately, for dual and augmented dual first order
methods, that using an appropriate value for $\ein$ (depending on
the desired accuracy $\epsilon$ that we want to solve the QP
\eqref{original_primal}) we can still preserve the convergence rates
of  Algorithm {\bf FOM}($d_\rho,{\calK}_d$) given in Lemma
\ref{lemma_sublin_dfg}, although we use inexact dual gradients. In
the sequel, we derive in a unified framework the computational
complexity of the dual and augmented dual (fast) gradient methods.
From our knowledge, this  is the first time when both approaches,
dual and augmented dual first order methods, are analyzed uniformly.
First, we  show that by introducing  inexact values for the dual
function and for its gradient given by the following expressions:
\begin{equation}\label{inexact_framework}
\bar{d}_\rho(\mu) = \lu_\rho(\bar{u}(\mu),\mu) \;\; \text{and} \;\;
 \bar{\nabla} d_\rho(\mu) = G \bar{u}(\mu) + g - s(\bar{u}(\mu),\mu),
\end{equation}
then we have a similar descent relation as in \eqref{descentlemma}
given in the  following lemma:

\begin{lemma}
Given $\ein > 0$ such that \eqref{duquad_in} holds,
then based on the definitions \eqref{inexact_framework}
we get the following inequalities:
\begin{align}
\label{descentlemmainexact}  0 \leq [\bar{d}_\rho(\mu)  \!+ \langle
\bar{\nabla} d_\rho(\mu),\lambda - \mu \rangle] - d_\rho(\lambda)
\leq\! L_{\text{d}} \|\mu -  \lambda\|^2 +\! 2 \ein  \quad \forall
\lambda,\mu \!\in\! \rset^p.
\end{align}
\end{lemma}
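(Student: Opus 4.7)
My plan is to prove the two inequalities in~\eqref{descentlemmainexact} separately, reducing each to the exact descent lemma~\eqref{descentlemma} via a Lagrangian interpretation of the quantity $q(\lambda):=\bar{d}_\rho(\mu)+\langle\bar{\nabla} d_\rho(\mu),\lambda-\mu\rangle$.

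For the lower bound $q(\lambda)\ge d_\rho(\lambda)$, I would first unfold the definitions in~\eqref{inexact_framework} and observe that
\[
q(\lambda)=F(\bar{u}(\mu))+\langle\lambda,\,G\bar{u}(\mu)+g-s(\bar{u}(\mu),\mu)\rangle+\frac{\rho}{2}\|G\bar{u}(\mu)+g-s(\bar{u}(\mu),\mu)\|^2.
\]
This is precisely the integrand inside the inner minimum in~\eqref{auglag}, evaluated at $u=\bar{u}(\mu)$ with multiplier $\lambda$, except that the auxiliary variable $s$ is frozen at $s(\bar{u}(\mu),\mu)$ instead of being optimized for $\lambda$. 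Since any feasible $s\in\calK$ provides an upper bound on the inner minimum, we get $q(\lambda)\ge\mathcal{L}_\rho(\bar{u}(\mu),\lambda)\ge\min_{u\in U}\mathcal{L}_\rho(u,\lambda)=d_\rho(\lambda)$, which is the left inequality.

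For the upper bound I would insert $\pm d_\rho(\mu)\pm\langle\nabla d_\rho(\mu),\lambda-\mu\rangle$ to split
\[
q(\lambda)-d_\rho(\lambda)=[\bar{d}_\rho(\mu)-d_\rho(\mu)]+\langle\bar{\nabla} d_\rho(\mu)-\nabla d_\rho(\mu),\lambda-\mu\rangle+[d_\rho(\mu)+\langle\nabla d_\rho(\mu),\lambda-\mu\rangle-d_\rho(\lambda)].
\]
The first bracket is at most $\ein$ by~\eqref{duquad_in}, and the third bracket is at most $\tfrac{L_\text{d}}{2}\|\mu-\lambda\|^2$ by~\eqref{descentlemma}. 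For the middle inner product I would apply Cauchy--Schwarz and Young's inequality with parameter $L_\text{d}$, yielding $\tfrac{L_\text{d}}{2}\|\mu-\lambda\|^2+\tfrac{1}{2L_\text{d}}\|\bar{\nabla} d_\rho(\mu)-\nabla d_\rho(\mu)\|^2$. Summing the three pieces gives $L_\text{d}\|\mu-\lambda\|^2+\ein+\tfrac{1}{2L_\text{d}}\|\bar{\nabla} d_\rho(\mu)-\nabla d_\rho(\mu)\|^2$.

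The main obstacle is showing that the residual gradient-error term can be absorbed into a second copy of $\ein$, i.e.~establishing $\|\bar{\nabla} d_\rho(\mu)-\nabla d_\rho(\mu)\|^2\le 2L_\text{d}\,\ein$. I would do this in two stages: first, firm non-expansiveness of the projection onto $\calK$ (trivially $s\equiv 0$ when $\rho=0$) yields $\|\bar{\nabla} d_\rho(\mu)-\nabla d_\rho(\mu)\|\le\|G\|\,\|\bar{u}(\mu)-u(\mu)\|$; second, strong convexity of $\mathcal{L}_\rho(\cdot,\mu)$ in $u$ applied to~\eqref{duquad_in} gives $\|\bar{u}(\mu)-u(\mu)\|^2\le 2\ein/\sigma_{\mathcal{L}}$. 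Matching the strong convexity constant $\sigma_{\mathcal{L}}$ against the denominator in the formula~\eqref{Ld} for $L_\text{d}$ then closes the estimate and completes the proof of~\eqref{descentlemmainexact}.
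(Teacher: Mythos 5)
Your lower bound and your three-term decomposition of the upper bound are exactly the paper's argument (the paper merges Cauchy--Schwarz with $\sqrt{ab}\le(a+b)/2$ where you invoke Young's inequality, which is the same thing). The proof therefore hinges entirely on your final claim $\norm{\bar{\nabla} d_{\rho}(\mu)-\nabla d_{\rho}(\mu)}^2\le 2L_{\text{d}}\ein$, and it is there that your route has a genuine gap. Deriving this bound from strong convexity of $\mathcal{L}_{\rho}(\cdot,\mu)$ in $u$ works only in Case~1 ($\rho=0$, $Q\succ 0$), where $\sigma_{\mathcal{L}}=\lambda_{\min}(Q)$ and indeed $\|G\|^2\cdot\tfrac{2\ein}{\sigma_{\mathcal{L}}}=2L_{\text{d}}\ein$. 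In the augmented case $\rho>0$ — the case this unified lemma is specifically meant to cover — the argument fails twice over. First, $\sigma_{\mathcal{L}}$ may simply be zero: for $\calK=\rset^p_{-}$ the term $\tfrac{\rho}{2}\,\text{dist}_{\calK}(\cdot)^2$ is convex but not strongly convex, so $\sigma_{\mathcal{L}}=\lambda_{\min}(Q)=0$ in Case~2; even for $\calK=\{0\}$ one only gets $\sigma_{\mathcal{L}}=\lambda_{\min}(Q+\rho G^TG)$, which vanishes whenever $Q+\rho G^TG$ is singular (the paper explicitly treats $\sigma_{\mathcal{L}}=0$ in its complexity theorem). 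Second, even when $\sigma_{\mathcal{L}}>0$, the constants do not "match": your chain gives $\norm{\bar{\nabla} d_{\rho}(\mu)-\nabla d_{\rho}(\mu)}^2\le \tfrac{2\|G\|^2}{\sigma_{\mathcal{L}}}\ein$, and $\tfrac{\|G\|^2}{\sigma_{\mathcal{L}}}\le L_{\text{d}}=\tfrac{\|G\|^2}{\lambda_{\min}(Q)+\rho\|G\|^2}$ would require $\lambda_{\min}(Q+\rho G^TG)\ge\lambda_{\min}(Q)+\rho\|G\|^2$, which is false in general (take $Q=I$, $G=\mathrm{diag}(1,10)$, $\rho=1$: the left side is $2$, the right side is $101$).

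The paper closes this step without any strong convexity, by working in the dual variable rather than the primal one: fix $\tilde{u}=\bar{u}(\mu)$ and consider $h(\nu)=\mathcal{L}_{\rho}(\tilde{u},\nu)-d_{\rho}(\nu)$ as a function of the multiplier $\nu$. This $h$ is nonnegative, $\nabla h(\mu)=\bar{\nabla} d_{\rho}(\mu)-\nabla d_{\rho}(\mu)$, and $\nabla h$ is $L_{\text{d}}$-Lipschitz, so the self-bounding inequality $\norm{\nabla h(\mu)}^2\le 2L_{\text{d}}\bigl(h(\mu)-\min_{\nu}h(\nu)\bigr)\le 2L_{\text{d}}\ein$ delivers \eqref{inexact_gradient_rel} with exactly the right constant, uniformly over $\rho\ge 0$. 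Your projection-nonexpansiveness observation in stage (a) is correct but unnecessary once this dual-side argument is used; to repair your proof you should replace stages (b)--(c) by this argument (or restrict the lemma to $\rho=0$ with $Q\succ0$, which is not what is being claimed).
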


\begin{proof}
From the definition of $d_{\rho}$, \eqref{duquad_in} and
\eqref{inexact_framework}    it can be derived:
\begin{align*}
d_{\rho} (\lambda) & = \min\limits_{u \in U, \; s \in \calK}
F(u) + \langle \lambda, Gu+g -s \rangle + \frac{\rho}{2}\norm{Gu+g-s}^2\\
&\le F(\bar{u}(\mu)) + \left\langle \lambda, G \bar{u}(\mu) + g - s(\bar{u}(\mu),\mu) \right\rangle  + \frac{\rho}{2}\left \| G \bar{u}(\mu) + g - s(\bar{u}(\mu),\mu) \right \|^2 \\
& = \mathcal{L}_{\rho}(\bar{u}(\mu),\mu) + \langle G\bar{u}(\mu) + g - s(\bar{u}(\mu),\mu), \lambda - \mu \rangle \\
& = \bar{d}_{\rho}(\mu) + \langle \bar{\nabla} d_{\rho}(\mu), \lambda - \mu\rangle,
\end{align*}
which proves the first inequality. In order to prove the second inequality,
let $\tilde{u} \in U$ be a fixed primal point such that
$\mathcal{L}_{\rho}(\tilde{u},\mu) \ge d_{\rho}(\mu)$. Then, we note that
the nonnegative function $h(\mu) = \mathcal{L}_{\rho}(\tilde{u},\mu) -d_{\rho}(\mu) \ge 0$
has Lipschitz gradient with constant $L_d$ and  thus we have \cite{Nes:04}:
\begin{align*}
\frac{1}{2 L_d} & \left\| \left(G \tilde{u} + g - s(\tilde{u},\mu) \right) - \nabla d_{\rho}(\mu)\right\|^2  =  \frac{1}{2 L_d}  \norm{\nabla h(\mu)}^2 \\
& \le  h(\mu) - \min_{\nu \in \rset^p} h(\nu) \le \mathcal{L}_{\rho}(\tilde{u},\mu) - d_{\rho}(\mu).
\end{align*}
Taking now $\tilde{u} = \bar{u}(\mu)$ and using \eqref{duquad_in}, then we obtain:
\begin{equation}\label{inexact_gradient_rel}
   \norm{\bar{\nabla} d_{\rho} (\mu) - \nabla d_{\rho}(\mu)} \le \sqrt{2 L_d \ein}  \qquad \forall \mu \in \rset^p.
\end{equation}
Furthermore, combining \eqref{inexact_gradient_rel} with
\eqref{descentlemma} and  \eqref{duquad_in} we have:
\begin{align*}
& d_{\rho}(\lambda)
\ge \bar{d}_{\rho}(\mu) + \langle \nabla d_{\rho}(\mu), \lambda -\mu \rangle -
 \frac{L_d}{2}\norm{\lambda - \mu}^2 - \ein \\
& \ge \bar{d}_{\rho}(\mu) + \langle \bar{\nabla} d_{\rho}(\mu), \lambda - \mu \rangle
- \frac{L_d}{2}\norm{\lambda - \mu}^2 + \langle \nabla d_{\rho} (\mu) - \bar{\nabla} d_{\rho}(\mu), \lambda - \mu \rangle
- \ein\\
& \ge \bar{d}_{\rho}(\mu) \!+ \! \langle \bar{\nabla} d_{\rho}(\mu), \lambda - \mu \rangle
- \frac{L_d}{2}\norm{\lambda - \mu}^2 - \norm{\bar{\nabla} d_{\rho}(\mu) \!-\! \nabla d_{\rho}(\mu)} \norm{\lambda - \mu} - \ein\\
& \overset{\eqref{inexact_gradient_rel}}{\ge} \bar{d}_{\rho}(\mu) + \langle \bar{\nabla} d_{\rho} (\mu), \lambda - \mu \rangle
 - \frac{L_d}{2}\norm{\lambda - \mu}^2  - \sqrt{2L_d \ein}\norm{\lambda -\mu} - \ein.
\end{align*}
Using the relation $\sqrt{ab} \le (a + b)/2$ we have:
\begin{equation*}
 d_{\rho}(\lambda) \ge \bar{d}_{\rho}(\mu) + \langle \bar{\nabla} d_{\rho} (\mu), \lambda - \mu \rangle
 - L_d\norm{\lambda - \mu}^2  - 2 \ein,
\end{equation*}
which shows  the second inequality of our lemma.  \qed
\end{proof}

\noindent   This lemma will play a major role in proving rate of
convergence  for the methods presented in this paper. Note that in
\eqref{descentlemmainexact} $\ein$ enters linearly, while in
\cite{DevGli:14,NedNec:12} $\ein$ enters quadratically in the
context of augmented Lagrangian and thus in the sequel we will get
better convergence estimates than those in the previous papers. In
conclusion, for solving the dual problem \eqref{dual_pr} in DuQuad
we use the following inexact (augmented) dual first order
algorithm:
\begin{center}
\framebox{
\parbox{7.7cm}{
\begin{center}
\textbf{ Algorithm {\bf DFOM ($d_\rho,\calK_d$)} }
\end{center}
{Given $\lambda^0 = \mu^1 \in \calK_d$, for $k\geq 1$ compute:}
\begin{enumerate}
\item $\bar{u}^k$ satisfying \eqref{duquad_in} for $\mu = \mu^k$,
i.e. $\bar{u}^k = \bar{u}(\mu^k)$
\item ${\lambda}^{k}= \left[ \mu^k + \frac{1}{2L_{\text{d}}}
\bar{\nabla} d_\rho(\mu^k)\right]_{\calK_d}$,
\item $\mu^{k+1} = \lambda^k + \beta_k (\lambda^k - \lambda^{k-1})$.
\end{enumerate}
}}
\end{center}
Recall that  $\bar{u}^k = \bar{u}(\mu^k)$ satisfying the inner criterion \eqref{duquad_in} and  $\bar{\nabla} d_\rho(\mu^k) = G \bar{u}^k + g - s(\bar{u}^k,\mu^k)$. Moreover,
$\beta_k$ is chosen as follows:
\begin{itemize}
\item  \textbf{DGM}: in (augmented) Dual Gradient  Method $\beta_k=
\frac{\theta_k -1 }{\theta_{k+1}}$, where $\theta_k=1$ for all $k$,
or equivalently   $\beta_k=0$ for all~$k$, i.e. the ordinary gradient algorithm.

\item \textbf{DFGM}: in (augmented) Dual Fast Gradient Method
$\beta_k=\frac{\theta_k -1 }{\theta_{k+1}}$, where $\theta_{k+1} =
\frac{1 + \sqrt{1 + 4 \theta_k^2 }}{2}$ and $\theta_1=1$, i.e. a variant of Nesterov's accelerated scheme.
\end{itemize}

\noindent Therefore, in DuQuad we can solve the smooth (augmented) dual problem
\eqref{dual_pr} either with dual gradient method \textbf{DGM}
($\beta_k =0$) or with dual fast gradient method \textbf{DFGM}
($\beta_k$ is updated based on $\theta_k$).    Recall that for
computing $\bar{u}^k$ in DuQuad we use Algorithm {\bf
FOM}($\lu_\rho(\cdot,\mu^k),U$) (see the discussion of Step 1).
When applied to inner subproblem \eqref{ul}, Algorithm {\bf
FOM}($\lu_\rho(\cdot,\mu^k),U$)  will converge linearly provided
that $\sigma_{\lu} > 0$. Moreover,  when
applying Algorithm {\bf FOM}($\lu_\rho(\cdot,\mu^k),U$) we use
warm start: i.e. we start our iteration from previous computed
$\bar{u}^{k-1}$. Combining the inexact descent relation
\eqref{descentlemmainexact} with Lemma \ref{lemma_sublin_dfg} we
obtain the following convergence rate for the general Algorithm {\bf
DFOM}($d_\rho,\calK_d$) in terms of dual function values of
\eqref{dual_pr}:

\begin{theorem} \cite{DevGli:14,NedNec:12,NecNed:13}
\label{lemma_sublin_dfg_inexact} For the smooth (augmented) dual  problem
\eqref{dual_pr} the dual  sequences $\left(\lambda^k, \mu^k\right)_{k\geq
0}$  generated by Algorithm {\bf DFOM}($d_\rho,\calK_d$) satisfy the following convergence estimate on dual suboptimality:
\begin{equation}
\label{bound_gen_dfg} F^* - d_\rho(\lambda^k)  \leq   \frac{4
L_{\text{d}} \mathcal{R}^2_{\text{d}}}{(k+1)^{p(\beta_k)}} +
2(k+1)^{p(\beta_k)-1} \ein,
\end{equation}
where recall $\mathcal{R}_{\text{d}} =
\min \limits_{\lambda^* \in \Lambda^*} \|\lambda^* - \lambda^0\|$
and $p(\beta_k)$ is  defined as in \eqref{pbetak}. \qed
\end{theorem}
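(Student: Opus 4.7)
The plan is to adapt the standard convergence proofs for the exact (fast) gradient method to the inexact setting by replacing every use of the classical descent lemma \eqref{descentlemma} with its inexact counterpart \eqref{descentlemmainexact}. The key observation is that \eqref{descentlemmainexact} has exactly the structure of a $(2\ein, 2L_{\text{d}})$-inexact first-order oracle in the sense of Devolder--Glineur--Nesterov: the quadratic penalty coefficient is $L_{\text{d}} = \tilde{L}/2$ with $\tilde{L} = 2L_{\text{d}}$, and the additive error is $2\ein$. This is precisely why Algorithm \textbf{DFOM} uses the step size $1/(2L_{\text{d}})$, which is the reciprocal of the effective Lipschitz constant.

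For \textbf{DGM} ($\beta_k = 0$), I would mimic the textbook $O(1/k)$ proof for projected gradient ascent. Applying \eqref{descentlemmainexact} at $\mu = \mu^{k+1} = \lambda^k$ with $\lambda = \lambda^{k+1}$ (to lower-bound the per-step improvement) and with $\lambda = \lambda^*$ (to connect with $F^* = d_\rho(\lambda^*)$), and combining these two inequalities with the three-point identity for the projection step $\lambda^{k+1} = [\mu^{k+1} + \bar{\nabla} d_\rho(\mu^{k+1})/(2L_{\text{d}})]_{\calK_d}$, produces a monotone recursion of the form
\[
2L_{\text{d}} \|\lambda^{k+1} - \lambda^*\|^2 + \bigl(F^* - d_\rho(\lambda^{k+1})\bigr) \;\le\; 2L_{\text{d}} \|\lambda^{k} - \lambda^*\|^2 + 2\ein.
\]
Telescoping from $0$ to $k$ and using monotonicity to pick out the last iterate yields the stated bound with $p(\beta_k)=1$. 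Crucially, the error does not accumulate: each step contributes only an $O(\ein)$ amount that is absorbed against the distance-to-optimum decrease, leaving the additive constant $2\ein$.

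For \textbf{DFGM}, I would run the standard estimate-sequence argument of Nesterov. Building the usual potential
\[
\Phi_k \;=\; \theta_{k-1}^2 \bigl(F^* - d_\rho(\lambda^{k-1})\bigr) + 2L_{\text{d}}\,\|\nu^k - \lambda^*\|^2,
\]
where $\nu^k$ is the auxiliary ``extrapolation center'' sequence implicitly defined by the momentum update, one shows via \eqref{descentlemmainexact} and the quadratic identity that $\Phi_{k+1} \le \Phi_k + 2\theta_k \ein$, in place of $\Phi_{k+1} \le \Phi_k$ in the exact analysis. Telescoping and using $\theta_k \sim k/2$ gives $\Phi_k \le 2L_{\text{d}} \mathcal{R}^2_{\text{d}} + 2\ein \sum_{j=1}^{k} \theta_j = O(L_{\text{d}} \mathcal{R}^2_{\text{d}} + k^2 \ein)$, and dividing by $\theta_{k-1}^2 \sim (k+1)^2/4$ yields the claimed $O(L_{\text{d}} \mathcal{R}^2_{\text{d}}/(k+1)^2 + (k+1)\ein)$ bound.

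The main obstacle is tracking constants in the accelerated case: one must verify that the per-step ``noise'' injected into the potential $\Phi_k$ scales as $\theta_k \ein$ rather than $\theta_k^2 \ein$, since the latter would blow up to $k^3 \ein$ after telescoping and divide down to an unacceptable $(k+1)^2 \ein$ error term. This linear-in-$\ein$ propagation is precisely the advantage afforded by \eqref{descentlemmainexact} over the quadratic-in-$\ein$ descent inequalities used in \cite{DevGli:14,NedNec:12}, as flagged in the remark following that lemma, and it is what makes the stated noise term $2(k+1)\ein$ attainable. Once this linear scaling is nailed down, the factor $4L_{\text{d}}$ in the clean term comes out automatically from the effective Lipschitz constant being $2L_{\text{d}}$ and the standard $2/(k+1)^{p(\beta_k)}$ rate from Lemma \ref{lemma_sublin_dfg}.
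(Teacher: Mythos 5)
Your overall strategy --- reading \eqref{descentlemmainexact} as a $(2\ein,\,2L_{\text{d}})$-inexact oracle in the sense of Devolder--Glineur--Nesterov and rerunning the exact gradient and estimate-sequence proofs with the effective Lipschitz constant $2L_{\text{d}}$ --- is exactly the route taken by the references cited for this theorem (the paper itself states it without proof), and it correctly explains the step size $1/(2L_{\text{d}})$ in Algorithm \textbf{DFOM}. There are, however, two concrete problems in your execution. For \textbf{DGM}, your telescoped recursion controls $\sum_{j=1}^{k}\bigl(F^*-d_\rho(\lambda^j)\bigr)$, and you invoke ``monotonicity'' to pick out the last iterate. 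But with an inexact oracle the per-step descent property only gives $d_\rho(\lambda^{j+1})\ge d_\rho(\lambda^j)-2\ein$, and these losses accumulate: converting the sum bound into a last-iterate bound this way costs an extra $O(k\ein)$, not $O(\ein)$, which destroys the constant additive term $2\ein$ claimed in \eqref{bound_gen_dfg} when $p(\beta_k)=1$. This is precisely why \cite[Theorem 2]{DevGli:14} states the rate in the \emph{average} dual iterate $\hat{\lambda}^k=\frac{1}{k+1}\sum_{j=0}^k\lambda^j$ (concavity of $d_\rho$ turns the sum bound into a non-accumulating one), and why the paper, in the remark following the theorem, redefines its output as one projected inexact-gradient step taken from $\hat{\lambda}^k$. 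Without that device your last-iterate claim for \textbf{DGM} does not follow.

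For \textbf{DFGM}, your noise bookkeeping is internally inconsistent. You assert $\Phi_{k+1}\le\Phi_k+2\theta_k\ein$; telescoping would then give an accumulated error $2\ein\sum_j\theta_j=2\ein\,\theta_k^2$, which after dividing by $\theta_k^2$ yields $O(\ein)$ --- a constant, not the $O((k+1)\ein)$ you write down. The correct per-step injection is in fact of order $\theta_k^2\ein$ (cf.\ Lemma~\ref{th_tseng_2}, where the accumulated error is $2\sum_j\theta_j^2\ein$), and your worry that this ``divides down to an unacceptable $(k+1)^2\ein$'' is an arithmetic slip: since $\sum_j\theta_j^2\le\max_j\theta_j\cdot\sum_j\theta_j=\max_j\theta_j\cdot\theta_k^2$ and $\max_j\theta_j\le k$, one gets $\sum_j\theta_j^2/\theta_k^2\le k$, i.e.\ exactly the stated $2(k+1)^{p(\beta_k)-1}\ein$ term. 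So the bound in \eqref{bound_gen_dfg} is attainable by your general approach, but only after repairing the \textbf{DGM} last-iterate step and redoing the error accounting in the accelerated case.
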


\noindent Note that in \cite[Theorem 2]{DevGli:14}, the convergence rate of \textbf{DGM} scheme is provided  in the average dual iterate $\hat{\lambda}^k = \frac{1}{k+1}
\sum\limits_{j=0}^k \lambda^j$  and not in the last dual iterate $\lambda^k$. However, for a uniform treatment  in Theorem \ref{lemma_sublin_dfg_inexact} we redefine the dual  final point (the dual last iterate $\lambda^k$ when some stopping criterion is satisfied)
as follows: $\lambda^k = \left[\hat{\lambda}^k + \frac{1}{2L_{\text{d}}} \bar{\nabla} d_\rho(\hat{\lambda}^k)\right]_{\calK_d}$.


\subsection{How to choose inner accuracy $\ein$ in DuQuad}
\noindent We now show how to choose the inner accuracy $\ein$ in
DuQuad.  From Theorem \ref{lemma_sublin_dfg_inexact} we conclude
that in order to get $\epsilon$-dual suboptimality, i.e. $ F^* -
d_\rho(\lambda^k) \leq \epsilon$, the inner accuracy $\ein$ and the number of outer iteration $k_{\text{out}}$ (i.e. number of updates of Lagrange multipliers) have to
be chosen as follows:
\begin{equation}
\label{choose_ein}
\ein =
\begin{cases} \frac{\epsilon}{4} & \mbox{if } \;\; \textbf{DGM} \\
 \frac{\epsilon\sqrt{\epsilon}}{8 \mathcal{R}_{\text{d}} \sqrt{2L_{\text{d}}}}
 & \mbox{if } \;\; \textbf{DFGM},
\end{cases}
\hspace{20pt}
k_{\text{out}} = \begin{cases}
 \frac{8 L_d \mathcal{R}_d^2}{\epsilon} & \text{if} \;\; \textbf{DGM} \\
 \sqrt{\frac{8 L_d \mathcal{R}_d^2}{\epsilon}} & \text{if} \;\; \textbf{DFGM}.
\end{cases}
\end{equation}
Indeed, by enforcing each term of the right hand side of
\eqref{bound_gen_dfg} to be smaller than $\frac{\epsilon}{2}$
we obtain first the bound on the number of the outer iterations $k_{\text{out}}$.
By replacing this bound into the expression of $\ein$, we also obtain how to choose $\ein$, i.e  the estimates \eqref{choose_ein}.
We conclude that the inner QP \eqref{ul} has to be solved with
higher accuracy in dual fast gradient algorithm \textbf{DFGM} than
in dual  gradient algorithm \textbf{DGM}. This shows that dual
gradient algorithm  \textbf{DGM} is  robust to inexact (augmented) dual first order information, while dual fast gradient algorithm \textbf{DFGM} is sensitive  to
inexact computations (see also Fig. \ref{dfgm_sensitivity}).  In DuQuad
the user can choose either Algorithm \textbf{DFGM} or Algorithm
\textbf{DGM} for solving the (augmented) dual problem \eqref{dual_pr} and he can
also choose the inner accuracy $\ein$ for solving the inner problem
(in the toolbox the default values for $\ein$ are taken of the same order as in \eqref{choose_ein}).
\begin{figure}[h!]
\centering
\includegraphics[width=0.52\textwidth,height=4.5cm]{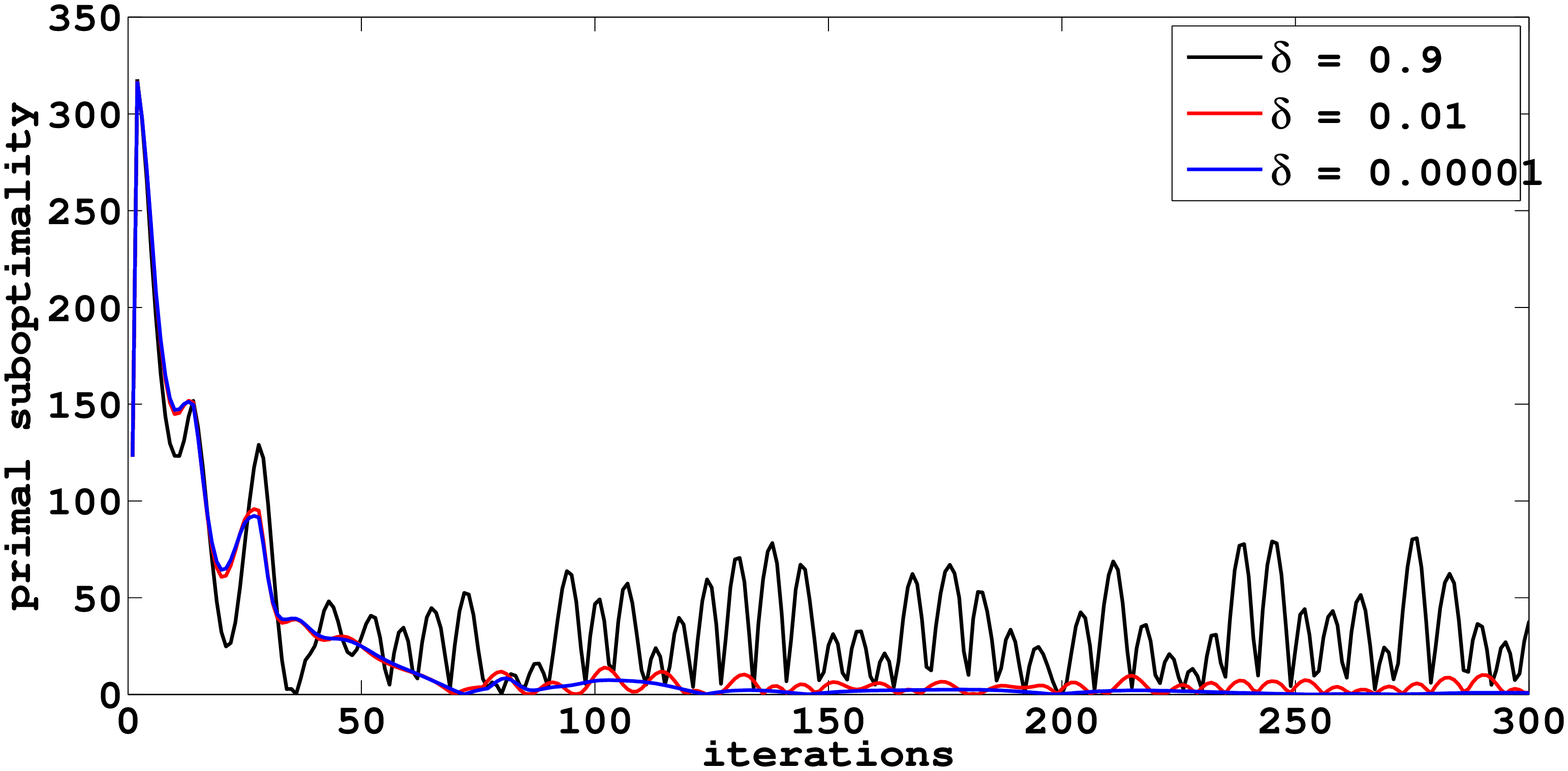}
\hspace*{-0.7cm}
\includegraphics[width=0.52\textwidth,height=4.5cm]{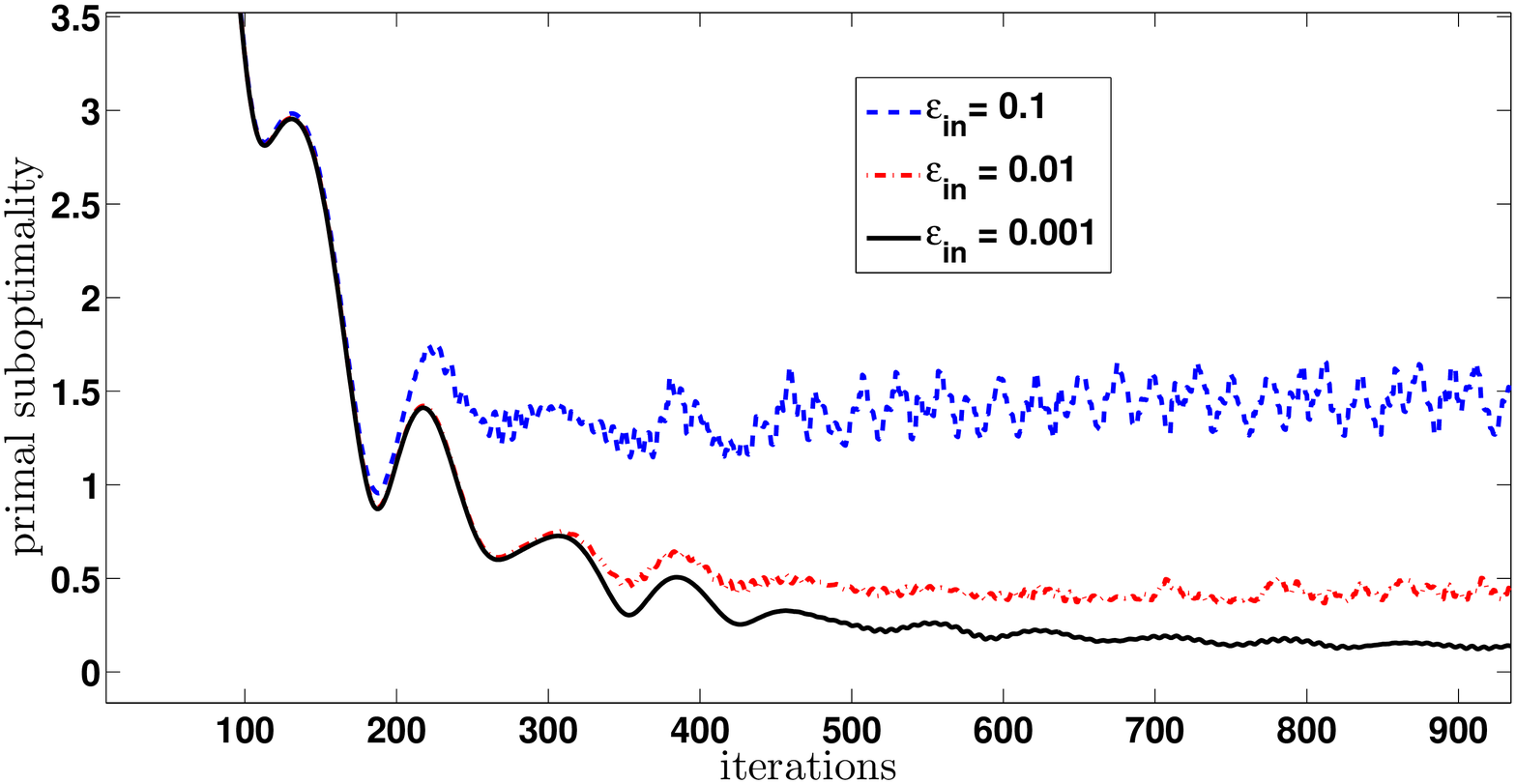}
\caption{Behavior of Algorithms \textbf{DGM} (left),  \textbf{DFGM}(right) in terms of primal suboptimality  w.r.t. inner accuracy $\ein$ for a strongly convex  QP  with $\epsilon=0.01$.}
\label{dfgm_sensitivity}
\end{figure}


\section{How to recover an $\epsilon$-primal solution in DuQuad}
\noindent It is natural to  investigate  how to recover an
$\epsilon$-primal solution for the original QP \eqref{original_primal}.
Since dual suboptimality is given in the last dual iterate
$\lambda^k$, it is natural to consider as an approximate
primal solution the last primal iterate generated by Algorithm {\bf DFOM}
in $\lambda^k$, i.e.:
\begin{equation}\label{last_primal}
\bar{u}_\epsilon^k = \bar{u}(\lambda^k).
\end{equation}
Note that the last primal iterate $\bar{u}_\epsilon^k =
\bar{u}(\lambda^k)$ coincides with $\bar{u}^k = \bar{u}(\mu^k)$  for
Algorithm \textbf{DGM}. However, for Algorithm \textbf{DFGM} these
two sequences are different, i.e. $\bar{u}_\epsilon^k \not =
\bar{u}^k$.   We will show below that the last primal iterate
$\bar{u}_\epsilon^k$ is an  $\sqrt{\epsilon}$-primal solution for
the original QP \eqref{original_primal}, provided that $ F^* -
d_\rho(\lambda^k) \leq \epsilon$. We can also construct an
approximate primal solution based on an average of all previous
primal iterates generated by Algorithm {\bf DFOM} , i.e.:
\begin{equation}\label{average_primal}
 \hat{u}_\epsilon^k = \sum_{j=1}^k   \frac{\theta_j \bar{u}^j}{S_k},
 \quad S_k=\sum_{j=1}^k \theta_j.
\end{equation}
Recall that $\theta_j=1$ in Algorithm \textbf{DGM} and $\theta_j$ is
updated according to the rule $\theta_{j+1} = \frac{1 + \sqrt{1 + 4
\theta_j^2 }}{2}$ and $\theta_1=1$ in Algorithm  \textbf{DFGM}. In the sequel, we
prove that the average of primal iterates
sequence   $\hat{u}_\epsilon^k$ is an  $\epsilon$-primal solution
for the original QP  \eqref{original_primal},  provided that $ F^* -
d_\rho(\lambda^k) \leq \epsilon$.

\noindent Before proving  primal rate of convergence for Algorithm
\textbf{DFOM} we  derive a bound on  $\norm{\lambda^{k+1} -
\lambda^*}$, with $\lambda^k$ generated by algorithm \textbf{DFOM},
bound that  will be used in the proofs of our convergence results.
In the case of \textbf{DGM}, using its particular iteration, for any
$\lambda \in \mathcal{K}_d$, we have:
\begin{align}
  \norm{\lambda^{k+1}-\lambda}^2 &=  \norm{\lambda^k  - \lambda}^2  + 2\langle \lambda^{k+1} - \lambda^k, \lambda^k - \lambda \rangle + \norm{\lambda^{k+1} - \lambda^k}^2 \nonumber\\
&  = \norm{\lambda^k  - \lambda}^2  + 2\langle \lambda^{k+1} - \lambda^k, \lambda^{k+1}- \lambda \rangle - \norm{\lambda^{k+1} - \lambda^k}^2 \nonumber\\
&  \le \norm{\lambda^k  - \lambda}^2  + \frac{1}{L_{\text{d}}}
\langle \bar{\nabla} d_{\rho}(\lambda^k), \lambda^{k+1} - \lambda \rangle - \norm{\lambda^{k+1} - \lambda^k}^2 \nonumber\\
&  = \norm{\lambda^k  - \lambda}^2  + \frac{1}{L_{\text{d}}} \langle \bar{\nabla} d_{\rho}(\lambda^k) , \lambda^k - \lambda \rangle \label{rel_seq1}\\
& \quad \quad + \frac{1}{L_{\text{d}}} \left(\langle \bar{\nabla}
d_{\rho}(\lambda^k), \lambda^{k+1} - \lambda^{k} \rangle -
L_{\text{d}} \norm{\lambda^{k+1} - \lambda^k}^2\right)
\nonumber\\
& \le \norm{\lambda^k  -\lambda}^2  +
\frac{1}{L_{\text{d}}}(d_{\rho}(\lambda^{k+1}) - d_{\rho}(\lambda))
+ \frac{\ein}{L_{\text{d}}}.\nonumber
\end{align}
Taking now $\lambda = \lambda^*$ and  using an inductive argument,
we get:
\begin{equation}\label{bound_seq_dgm}
 \norm{\lambda^{k} - \lambda^*} \le R_d + \sqrt{\frac{k\ein}{L_{\text{d}}}}.
\end{equation}
On the other hand, for the scheme \textbf{DFGM}, we introduce the
notation $l^k = \lambda^{k-1} + \theta_{k}(\lambda^k -
\lambda^{k-1})$ and present an auxiliary result:

\begin{lemma}\cite{NecPat:15,Tse:08}
\label{th_tseng_2} Let $(\lambda^k, \mu^k)$ be generated by
Algorithm \textbf{DFOM}($d_{\rho},\calK_d$) with $\theta_{k+1} =
\frac{1 + \sqrt{1 + 4 \theta_k^2 }}{2}$, then for any Lagrange
multiplier $\lambda \in \rset^p$ we have:
\begin{align*}
\theta_{k}^2 (d_{\rho}(\lambda) - d_{\rho}(\lambda^{k})) \!+\!
\sum\limits_{j=0}^{k}\theta_{j}\Delta(\lambda,\mu^j) \!+\!
L_\text{d} \norm{l^{k} - \lambda}^2  \!\le\!  L_\text{d}
\norm{\lambda^0 - \lambda}^2  + 2 \! \sum\limits_{j=0}^{k}\theta_j^2
\ein,
\end{align*}
for all  $k \ge 0$,  where $\Delta(\lambda,\mu) =
\bar{d}_{\rho}(\mu) + \langle \bar{\nabla} d_{\rho}(\mu), \lambda -
\mu\rangle - d_{\rho}(\lambda)$.
\end{lemma}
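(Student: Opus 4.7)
The plan is to establish a per-iteration three-term recursion of Lyapunov type, which telescopes to give the claim after summation. Concretely, I would aim to prove that for every $k \ge 0$,
\begin{align*}
\theta_{k+1}^2(d_\rho(\lambda) - d_\rho(\lambda^{k+1})) &+ \theta_{k+1}\Delta(\lambda,\mu^{k+1}) + L_{\text{d}}\|l^{k+1}-\lambda\|^2 \\
&\le \theta_k^2(d_\rho(\lambda) - d_\rho(\lambda^{k})) + L_{\text{d}}\|l^{k}-\lambda\|^2 + 2\theta_{k+1}^2\ein,
\end{align*}
with the convention $\theta_0 = 0$ and $l^0 = \lambda^0$ (which is consistent with $\theta_1 = 1$ and makes $\mu^1 = \lambda^0$). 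Summing this inequality from $k=0$ to the current index and noting that the $j=0$ summand in $\sum_j \theta_j\Delta(\lambda,\mu^j)$ vanishes yields the stated bound.

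To establish the one-step recursion, I would first invoke the inexact descent relation \eqref{descentlemmainexact} at $\mu = \mu^{k+1}$, $\lambda = \lambda^{k+1}$ to get the lower bound $d_\rho(\lambda^{k+1}) \ge \bar d_\rho(\mu^{k+1}) + \langle \bar\nabla d_\rho(\mu^{k+1}), \lambda^{k+1}-\mu^{k+1}\rangle - L_{\text{d}}\|\lambda^{k+1}-\mu^{k+1}\|^2 - 2\ein$. Second, since $\lambda^{k+1}$ maximizes over $\calK_d$ the $2L_{\text{d}}$-strongly concave quadratic $\nu \mapsto \langle \bar\nabla d_\rho(\mu^{k+1}), \nu - \mu^{k+1}\rangle - L_{\text{d}}\|\nu-\mu^{k+1}\|^2$, I would use the variational inequality at the convex combination
\begin{equation*}
\tilde\lambda = \Bigl(1-\tfrac{1}{\theta_{k+1}}\Bigr)\lambda^k + \tfrac{1}{\theta_{k+1}}\lambda \in \calK_d,
\end{equation*}
which is feasible because $\theta_{k+1}\ge 1$ and $\lambda^k,\lambda\in\calK_d$. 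This yields the strengthened estimate $d_\rho(\lambda^{k+1}) \ge \bar d_\rho(\mu^{k+1}) + \langle \bar\nabla d_\rho(\mu^{k+1}), \tilde\lambda - \mu^{k+1}\rangle + L_{\text{d}}\|\lambda^{k+1}-\tilde\lambda\|^2 - L_{\text{d}}\|\tilde\lambda-\mu^{k+1}\|^2 - 2\ein$. Expanding the linear model by linearity along the convex combination and using, on the $\lambda^k$-part, the nonnegativity $\Delta(\lambda^k,\mu^{k+1})\ge 0$ from the first half of \eqref{descentlemmainexact}, together with the identity $\bar d_\rho(\mu^{k+1}) + \langle \bar\nabla d_\rho(\mu^{k+1}),\lambda-\mu^{k+1}\rangle = d_\rho(\lambda) + \Delta(\lambda,\mu^{k+1})$ on the $\lambda$-part, produces a lower bound on $d_\rho(\lambda^{k+1})$ that is a convex combination of $d_\rho(\lambda^k)$ and $d_\rho(\lambda)+\Delta(\lambda,\mu^{k+1})$ plus the two quadratic terms.

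The next step multiplies by $\theta_{k+1}^2$ and invokes the defining relation $\theta_{k+1}^2 - \theta_{k+1} = \theta_k^2$ to convert the convex combination into the desired telescoping difference $\theta_{k+1}^2(d_\rho(\lambda)-d_\rho(\lambda^{k+1})) - \theta_k^2(d_\rho(\lambda)-d_\rho(\lambda^k))$ on one side and $\theta_{k+1}\Delta(\lambda,\mu^{k+1})$ on the other. What remains is to match the quadratic terms with $\|l^{k+1}-\lambda\|^2$ and $\|l^k-\lambda\|^2$. Using the momentum update $\mu^{k+1} = \lambda^k + \frac{\theta_k-1}{\theta_{k+1}}(\lambda^k-\lambda^{k-1})$ together with $l^{k+1} = \lambda^k + \theta_{k+1}(\lambda^{k+1}-\lambda^k)$ and $l^k = (1-\theta_k)\lambda^{k-1}+\theta_k\lambda^k$, direct algebra gives the two crucial identities
\begin{equation*}
\theta_{k+1}(\lambda^{k+1}-\tilde\lambda) = l^{k+1}-\lambda, \qquad \theta_{k+1}(\tilde\lambda-\mu^{k+1}) = \lambda - l^k,
\end{equation*}
so that $\theta_{k+1}^2 L_{\text{d}}\|\lambda^{k+1}-\tilde\lambda\|^2 = L_{\text{d}}\|l^{k+1}-\lambda\|^2$ and $\theta_{k+1}^2 L_{\text{d}}\|\tilde\lambda-\mu^{k+1}\|^2 = L_{\text{d}}\|l^k-\lambda\|^2$. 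Substituting and rearranging gives the one-step recursion.

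The main obstacle, and the technical heart of the argument, is the algebraic identification in the last display: the specific choice of $\tilde\lambda$ as a convex combination with weight $1/\theta_{k+1}$ is precisely what is needed so that the residuals $\theta_{k+1}(\lambda^{k+1}-\tilde\lambda)$ and $\theta_{k+1}(\tilde\lambda-\mu^{k+1})$ coincide with the extrapolated quantities $l^{k+1}-\lambda$ and $\lambda-l^k$; this matching only works because of Nesterov's update $\beta_k=(\theta_k-1)/\theta_{k+1}$ and the recursion $\theta_{k+1}^2=\theta_{k+1}+\theta_k^2$. Everything else (the inexact descent lemma, the projection variational inequality, and the telescoping summation) is routine. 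The linear (rather than quadratic) dependence on $\ein$ in \eqref{descentlemmainexact} propagates directly into the $2\theta_{k+1}^2\ein$ error term, which is the source of the sharper bound compared with \cite{DevGli:14,NedNec:12}.
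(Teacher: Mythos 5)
Your argument is correct and is essentially the standard Tseng-type estimate-sequence proof that the paper itself only cites (to \cite{NecPat:15,Tse:08}) rather than reproducing: the one-step inequality built from the inexact descent relation \eqref{descentlemmainexact}, the strong-concavity/optimality inequality for the projected step at the convex combination $\tilde\lambda$, the recursion $\theta_{k+1}^2-\theta_{k+1}=\theta_k^2$, and the two identities matching $\theta_{k+1}(\lambda^{k+1}-\tilde\lambda)$ and $\theta_{k+1}(\tilde\lambda-\mu^{k+1})$ with $l^{k+1}-\lambda$ and $\lambda-l^k$ all check out, and the telescoping with $\theta_0=0$, $l^0=\lambda^0$ gives exactly the stated bound with the linear $2\sum_j\theta_j^2\ein$ error. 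The only caveat is that your variational-inequality step requires $\tilde\lambda\in\calK_d$, hence $\lambda\in\calK_d$, whereas the lemma is stated for all $\lambda\in\rset^p$; this is a looseness in the lemma's statement rather than in your proof (when $\calK_d=\rset^p_+$ the claim needs $\lambda\in\calK_d$), and every subsequent use in the paper takes $\lambda\in\calK_d$ (namely $\lambda^*$ or $0$), so nothing downstream is affected.
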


\noindent Using this result and a similar reasoning as in
\cite{NecPat:15} we obtain the same relation \eqref{bound_seq_dgm}
for the scheme \textbf{DFGM}. Moreover, for simplicity, in the
sequel we also assume $\lambda^0=0$. In the next two sections we
derive  rate of convergence results of Algorithm \textbf{DFOM} in
both primal sequences, the primal last iterate \eqref{last_primal}
and an average of primal iterates \eqref{average_primal}.


\subsection{ The $\sqrt{\epsilon}$ convergence in the last primal iterate $\bar{u}_{\epsilon}^k$}
In this section we present rate of convergence  results for the
Algorithm \textbf{DFOM}, in terms of primal suboptimality and
infeasibility for the last primal iterate $\bar{u}_{\epsilon}^k$
defined in \eqref{last_primal}, provided that the relations
\eqref{choose_ein} hold.

\begin{theorem}
Let $\epsilon> 0$ be some desired accuracy and $\bar{u}_{\epsilon}^k =
\bar{u}(\lambda^k)$ be the primal last iterate sequence generated by Algorithm
\textbf{DFOM}($d_{\rho}, \mathcal{K}_d$)  using the inner accuracy from \eqref{choose_ein}.
Then, after  $k_{\text{out}}$   number of outer iterations given in \eqref{choose_ein},
$\bar{u}_{\epsilon}^{k_{\text{out}}}$ is $\sqrt{\epsilon}$-primal optimal for the original QP  \eqref{original_primal}.
\end{theorem}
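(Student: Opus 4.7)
The plan is to combine three ingredients: (i) the outer dual-suboptimality bound $F^*-d_\rho(\lambda^{k_{\text{out}}})\le\epsilon$, which Theorem~\ref{lemma_sublin_dfg_inexact} delivers for the values of $\ein$ and $k_{\text{out}}$ specified in~\eqref{choose_ein}; (ii) the inner-accuracy criterion $\lu_\rho(\bar u_\epsilon^{k_{\text{out}}},\lambda^{k_{\text{out}}})-d_\rho(\lambda^{k_{\text{out}}})\le\ein$, which is~\eqref{duquad_in} evaluated at $\mu=\lambda^{k_{\text{out}}}$; and (iii) the inexact-gradient estimate~\eqref{inexact_gradient_rel}, $\|\bar\nabla d_\rho(\lambda^{k_{\text{out}}})-\nabla d_\rho(\lambda^{k_{\text{out}}})\|\le\sqrt{2L_{\text{d}}\ein}$. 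The infeasibility residual will be identified with the inexact dual gradient itself and bounded via the $L_\text{d}$-smoothness of $d_\rho$; primal suboptimality will be read off from the inner-accuracy inequality together with the saddle-point relation at $\lambda^*$.

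First I would estimate the infeasibility. Set $r^k=G\bar u_\epsilon^{k_{\text{out}}}+g-s(\bar u_\epsilon^{k_{\text{out}}},\lambda^{k_{\text{out}}})$, which by~\eqref{inexact_framework} equals $\bar\nabla d_\rho(\lambda^{k_{\text{out}}})$; since $s(\cdot,\cdot)\in\calK$, one has $\text{dist}_\calK(G\bar u_\epsilon^{k_{\text{out}}}+g)\le\|r^k\|$. A projected-gradient argument applied to the concave $L_\text{d}$-smooth function $d_\rho$ over $\calK_d$ converts the $\epsilon$-dual suboptimality from (i) into a bound of order $\sqrt{2L_\text{d}\epsilon}$ on the gradient mapping at $\lambda^{k_{\text{out}}}$, which in turn controls $\|\nabla d_\rho(\lambda^{k_{\text{out}}})\|$ when $\calK_d=\rset^p$ and $\|[\nabla d_\rho(\lambda^{k_{\text{out}}})]_+\|$ when $\calK_d=\rset^p_+$. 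Combining this with (iii) and the calibration of $\ein$ in~\eqref{choose_ein} yields $\text{dist}_\calK(G\bar u_\epsilon^{k_{\text{out}}}+g)\le\mathcal{O}(\sqrt{L_\text{d}\epsilon})$.

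Primal suboptimality then splits into two one-sided bounds. The saddle-point relation $F^*=\min_u\lu_\rho(u,\lambda^*)\le\lu_\rho(\bar u_\epsilon^{k_{\text{out}}},\lambda^*)$, together with the observation that any $s\in\calK$ is an admissible upper bound for the inner minimisation over $s$, yields
\begin{equation*}
F^*\le F(\bar u_\epsilon^{k_{\text{out}}})+\langle\lambda^*,r^k\rangle+\tfrac{\rho}{2}\|r^k\|^2,
\end{equation*}
whence $F^*-F(\bar u_\epsilon^{k_{\text{out}}})\le\|\lambda^*\|\cdot\|r^k\|+\tfrac{\rho}{2}\|r^k\|^2=\mathcal{O}(\sqrt\epsilon)$. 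Conversely, the inner-accuracy criterion rewrites as
\begin{equation*}
F(\bar u_\epsilon^{k_{\text{out}}})+\langle\lambda^{k_{\text{out}}},r^k\rangle+\tfrac{\rho}{2}\|r^k\|^2\le d_\rho(\lambda^{k_{\text{out}}})+\ein\le F^*+\ein,
\end{equation*}
so $F(\bar u_\epsilon^{k_{\text{out}}})-F^*\le\ein+\|\lambda^{k_{\text{out}}}\|\cdot\|r^k\|$, which is again $\mathcal{O}(\sqrt\epsilon)$ once $\|\lambda^{k_{\text{out}}}\|\le\|\lambda^*\|+\|\lambda^{k_{\text{out}}}-\lambda^*\|$ is bounded via~\eqref{bound_seq_dgm}.

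The main technical obstacle is the infeasibility step, where the conversion of $\epsilon$-dual suboptimality into an $\mathcal{O}(\sqrt\epsilon)$ bound on $\|r^k\|$ must treat uniformly the case $\calK_d=\rset^p$ (where $\lambda^*$ is interior and the raw gradient is controlled directly by the descent lemma) and the case $\calK_d=\rset^p_+$ (where $\lambda^*$ may lie on the boundary of the orthant, so only the projected gradient mapping, not the raw gradient, is small; one then argues via the polarity $\lambda^*\in\rset^p_+$, $s(\bar u_\epsilon^{k_{\text{out}}},\lambda^{k_{\text{out}}})\in\rset^p_-$ that the gap $F^*-d_\rho(\lambda^{k_{\text{out}}})$ still dominates the positive part of $r^k$). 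A secondary delicacy is that~\eqref{bound_seq_dgm} only keeps $\|\lambda^{k_{\text{out}}}\|$ bounded by a universal constant of order $\mathcal{R}_\text{d}$ (for \textbf{DGM} it contributes an $\mathcal{O}(\mathcal{R}_\text{d})$ term, for \textbf{DFGM} an $\mathcal{O}(\sqrt\epsilon)$ term); in either case this constant only enters the multiplier hidden in the $\mathcal{O}(\sqrt\epsilon)$ rate.
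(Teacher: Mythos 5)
Your overall architecture (dual suboptimality from Theorem~\ref{lemma_sublin_dfg_inexact}, inner criterion \eqref{duquad_in}, inexact-gradient bound \eqref{inexact_gradient_rel}, then feasibility and two one-sided suboptimality estimates) matches the paper's, but there is a genuine gap in how you bound the suboptimality terms. In both directions you invoke Cauchy--Schwarz on $\langle\lambda^*,r^k\rangle$ and $\langle\lambda^{k_{\text{out}}},r^k\rangle$ with $r^k=\bar\nabla d_\rho(\lambda^{k_{\text{out}}})$ and then assert $\|r^k\|=\mathcal{O}(\sqrt\epsilon)$. That is false in the inequality-constrained case $\calK=\rset^p_-$, $\calK_d=\rset^p_+$: at (or near) a dual optimum the raw gradient of $d_\rho$ is not small --- for an inactive constraint with $\rho=0$ one has $r^k_i\approx G_iu^*+g_i<0$ bounded away from zero --- only the projected gradient mapping (equivalently, the components ``visible'' to $\text{dist}_\calK$) is $\mathcal{O}(\sqrt{L_d\epsilon})$. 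You correctly flag exactly this boundary issue for the feasibility estimate, but you do not carry the same caution into the suboptimality estimates, where it is just as fatal: $\|\lambda^*\|\,\|r^k\|$ and $\|\lambda^{k_{\text{out}}}\|\,\|r^k\|$ are then $\mathcal{O}(1)$, not $\mathcal{O}(\sqrt\epsilon)$.

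The repair is the recentering step the paper uses. For the upper bound, write $-\langle\lambda^{k_{\text{out}}},r^k\rangle=-\langle\lambda^{k_{\text{out}}},r^k-\nabla d_\rho(\lambda^*)\rangle-\langle\nabla d_\rho(\lambda^*),\lambda^{k_{\text{out}}}-\lambda^*\rangle-\langle\nabla d_\rho(\lambda^*),\lambda^*\rangle$; the last two terms are nonpositive by the first-order optimality condition on $\calK_d$ and complementarity, and the first is bounded by $\|\lambda^{k_{\text{out}}}\|\bigl(\sqrt{2L_d\ein}+\sqrt{2L_d(F^*-d_\rho(\lambda^{k_{\text{out}}}))}\bigr)$ using the cocoercivity-type inequality \eqref{grad_funcbound} together with \eqref{inexact_gradient_rel} --- this difference of gradients, unlike $\|r^k\|$ itself, genuinely is $\mathcal{O}(\sqrt\epsilon)$. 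For the lower bound, choose $s=[G\bar u_\epsilon^{k_{\text{out}}}+g]_\calK$ rather than $s(\bar u_\epsilon^{k_{\text{out}}},\lambda^{k_{\text{out}}})$ in the inner minimization, so that only $\text{dist}_\calK(G\bar u_\epsilon^{k_{\text{out}}}+g)$ (which your feasibility step does control) appears. The same recentering also streamlines your feasibility argument: the paper bounds $\text{dist}_\calK(G\bar u_\epsilon^{k_{\text{out}}}+g)$ by $\|\bar\nabla d_\rho(\lambda^{k_{\text{out}}})-\nabla d_\rho(\lambda^*)\|$ (using that $s(\bar u_\epsilon^{k_{\text{out}}},\lambda^{k_{\text{out}}})+Gu^*+g\in\calK$), which handles both cones $\calK_d$ uniformly and avoids the case split on the gradient mapping that your sketch requires. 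Your argument is fine as written only in the equality-constrained case $\calK=\{0\}$, where $\nabla d_\rho(\lambda^*)=0$ and $\|r^k\|$ really is small.
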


\begin{proof}
Using the Lipschitz property of the gradient of $d_{\rho}(\cdot)$, it is known that the following inequality holds  \cite{Nes:04}:
\begin{equation*}
d_{\rho}(\lambda) \le d_{\rho}(\mu) + \langle \nabla d_{\rho}(\mu), \lambda - \mu \rangle
 - \frac{1}{2L_d} \norm{\nabla d_{\rho}(\lambda) - \nabla d_{\rho} (\mu)}^2 \quad \forall \lambda, \mu \in \rset^p.
\end{equation*}
Taking $\mu = \lambda^*$ and using the optimality condition $\langle \nabla d_{\rho}(\lambda^*), \mu - \lambda^*\rangle \le 0$
for all $\mu \in \mathcal{K}_d$, we further have:
\begin{equation}\label{grad_funcbound}
\norm{\nabla d_{\rho}(\lambda) - \nabla d_{\rho}(\lambda^*)}
 \le \sqrt{2L_d (F^* - d_{\rho}(\lambda))} \qquad \forall \lambda \in \mathcal{K}_d.
\end{equation}
Considering $\lambda = \lambda^k$ and observing that
$ s(\bar{u}(\lambda^k),\lambda^k) + Gu^* + g \in \mathcal{K}$
we obtain a link between the primal feasibility
and dual suboptimality:
\begin{align}\label{aux_feas_bound}
 d_{\mathcal{K}}(G \bar{u}^k_{\epsilon}+g) & \le \left\| G \bar{u}(\lambda^k) + g  - s(\bar{u}(\lambda^k),\lambda^k) - G u^* - g \right\| \nonumber\\
 & = \norm{\bar{\nabla} d_{\rho}(\lambda^k)  - \nabla d_{\rho}(\lambda^*)}  \nonumber \\
&\le \norm{\bar{\nabla} d_{\rho}(\lambda^k) - \nabla d_{\rho} (\lambda^k)} + \norm{\nabla d_{\rho}(\lambda^k) - \nabla d_{\rho}(\lambda^*)} \nonumber\\
&  \overset{\eqref{grad_funcbound} + \eqref{inexact_gradient_rel}}{\le} \sqrt{2 L_d \ein} + \sqrt{2L_d (F^* - d_{\rho}(\lambda^k))}.
\end{align}

\noindent Provided that $F^* - d_{\rho}(\lambda^{k_{\text{out}}})
\le \epsilon$ and using $\ein$ as in \eqref{choose_ein}, we obtain:
\begin{equation}\label{pfeas_subopt_final}
d_{\mathcal{K}}(G \bar{u}^{k_{\text{out}}}_{\epsilon} + g) \le
\max \left\{ \frac{(L_d \epsilon)^{1/2}}{\sqrt{2}}, \frac{L_d^{1/4}}{(3R_d)^{1/2}} \epsilon^{3/4} \right\}  + (2L_d \epsilon)^{1/2}.
\end{equation}
\noindent Secondly, we find a link between the primal and dual
suboptimality. Indeed, we
have for all $\mathbf{\lambda} \in \mathcal{K}_d$:
\begin{align}\label{aux_subopt_left}
F^* & = \min_{u \in U, s \in \mathcal{K}} F(u)
+ \langle \mathbf{\lambda}^*, Gu + g - s\rangle  \nonumber \\
&\leq F(\bar{u}(\lambda^k)) +
\left \langle \mathbf{\lambda}^*, G \bar{u}(\lambda^k) + g - \left[G \bar{u}(\lambda^k) + g\right]_{\calK} \right \rangle.
\end{align}
Further, using the Cauchy-Schwartz inequality, we derive:
\begin{align} \label{psubopt_left}
F(\bar{u}^{k_{\text{out}}}_{\epsilon}) - F^*  & \geq - \|\lambda^*\| \text{dist}_{\mathcal{K}}(G \bar{u}(\lambda^{k_{\text{out}}}) + g) \nonumber \\
& \ge - R_d\max \left\{ \frac{(L_d \epsilon)^{1/2}}{\sqrt{2}}, \frac{L_d^{1/4}}{(3R_d)^{1/2}} \epsilon^{3/4} \right\}  - R_d(2L_d \epsilon)^{1/2}.
\end{align}
\noindent On the other hand, from the concavity  of
$d_{\rho}(\cdot)$ we obtain:
\begin{align}\label{aux_subopt_right}
&F(\bar{u}(\lambda^k)) - F^*
\le \bar{d}_{\rho}(\lambda^k) - F^* - \langle \bar{\nabla} d_{\rho}(\lambda^k), \lambda^k \rangle \nonumber\\
&\le d_{\rho}(\lambda^k) -F^* - \langle \nabla d_{\rho}(\lambda^*),
\lambda^k \rangle
 + \langle \nabla d_{\rho}(\lambda^*) - \bar{\nabla} d_{\rho}(\lambda^k), \lambda^k \rangle \nonumber + \ein \nonumber \\
&\le d_{\rho}(\lambda^k) -F^* - \langle \nabla d_{\rho}(\lambda^*),
\lambda^k -\lambda^*\rangle
 + \norm{\nabla d_{\rho}(\lambda^*) - \bar{\nabla} d_{\rho}(\lambda^k)}\norm{ \lambda^k} + \ein \nonumber \\
& \le \norm{\lambda^k}  \norm{\bar{\nabla} d_{\rho}(\lambda^k) - \nabla d_{\rho}(\lambda^*)} + \ein \nonumber \\
& \overset{\eqref{aux_feas_bound}}{\le} \norm{\lambda^k}\sqrt{2 L_d
\ein} + \norm{\lambda^k}\sqrt{2L_d (F^* - d_{\rho}(\lambda^k))}  + \ein.
\end{align}
Taking $k = k_{\text{out}}$ and $\ein$ as in \eqref{choose_ein},
based on \eqref{bound_seq_dgm} and on the implicit assumption that
$k_{\text{out}} \ge 1$, we observe that
$\norm{\lambda^{k_{\text{out}}}} \le \norm{\lambda^{k_{\text{out}}}
- \lambda^*} + \norm{\lambda^*} \le  4 R_d$ for both schemes
\textbf{DGM} and \textbf{DFGM}. Therefore, \eqref{aux_subopt_right}
implies:
\begin{equation*}
F(\bar{u}^{k_{\text{out}}}_{\epsilon}) - F^*
\overset{\eqref{pfeas_subopt_final}}{\le} 4R_d \max \left\{
\frac{(L_d \epsilon)^{1/2}}{\sqrt{2}},
\frac{L_d^{1/4}}{(3R_d)^{1/2}} \epsilon^{3/4} \right\}  + 4R_d(2L_d
\epsilon)^{1/2} + \ein.
\end{equation*}
\noindent  As a conclusion, from \eqref{psubopt_left} and the
previous inequality,  we get the bound:
\begin{align*}
| F(\bar{u}_{\epsilon}^{k_{\text{out}}}) - F^* | \le 4R_d \max
\left\{ \frac{(L_d \epsilon)^{1/2}}{\sqrt{2}},
\frac{L_d^{1/4}}{(3R_d)^{1/2}} \epsilon^{3/4} \right\}  + 4R_d(2L_d
\epsilon)^{1/2} + \ein,
 \end{align*}
which implies $| F (\bar{u}^{k_{\text{out}}}_{\epsilon}) - F^*| \le
\mathcal{O}(\sqrt{\epsilon})$. Using this fact and the feasibility
bound \eqref{pfeas_subopt_final}, which also implies
$\text{dist}_{\mathcal{K}} (G \bar{u}^{k_{\text{out}}}_{\epsilon} +
g) \le \mathcal{O}(\sqrt{\epsilon})$, we finally conclude that the
last primal iterate $\bar{u}^{k_{\text{out}}}_{\epsilon}$ is
$\sqrt{\epsilon}$-primal optimal. \qed
\end{proof}

\noindent We can also prove linear convergence for algorithm  \textbf{DFOM} provided that $\lambda_{\min} (Q) > 0$ (i.e. the objective function is smooth and strongly convex) and $U  = \rset^n$ (i.e. the inner problem is unconstrained). In this case we can show that the dual problem satisfies an error bound property \cite{NecNed:15,NecPat:15}. Under these settings \textbf{DFOM} is converging linearly (see \cite{NecNed:15,NecPat:15,WanLin:13} for more details).


\subsection{The $\epsilon$ convergence in the average of primal iterates $\hat{u}_{\epsilon}^k$}
Further, we analyze the convergence of the algorithmic framework
\textbf{DFOM} in the average of primal iterates
$\hat{u}^k_{\epsilon}$ defined in \eqref{average_primal}. Since we
consider different primal average iterates for the schemes
\textbf{DGM} and \textbf{DFGM}, we analyze separately the
convergence of these methods in  $\hat{u}^k_{\epsilon}$.

\begin{theorem}
Let $\epsilon> 0$ be some desired accuracy and
$\hat{u}_{\epsilon}^k$ be the primal average iterate given in
\eqref{average_primal}, generated by algorithm  \textbf{DGM}, i.e.
Algorithm \textbf{DFOM}($d_{\rho}, \mathcal{K}_d$) with $\theta_k =
1$ for all $k \ge 0$, using the inner accuracy from
\eqref{choose_ein}.  Then, after  $k_{\text{out}}$  number of outer
iterations given in \eqref{choose_ein},
$\hat{u}^{k_{\text{out}}}_{\epsilon}$ is $\epsilon$-primal  optimal
for the original QP  \eqref{original_primal}.
\end{theorem}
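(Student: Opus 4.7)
The plan is to establish upper bounds on the feasibility violation $\text{dist}_{\mathcal{K}}(G\hat{u}^N+g)$ and on both sides of the primal suboptimality $F(\hat{u}^N)-F^*$, where $N=k_{\text{out}}$, and to verify that under the choices \eqref{choose_ein} both are $O(\epsilon)$. The feature that distinguishes this result from the last-iterate theorem is that the average of $\bar{u}^k$ also naturally averages the associated slacks: $\hat{s}^N:=\frac{1}{N}\sum_{k=1}^N s(\bar{u}^k,\lambda^k)\in\mathcal{K}$ by convexity of $\mathcal{K}$, so
\begin{equation*}
\text{dist}_{\mathcal{K}}(G\hat{u}^N+g)\le\|G\hat{u}^N+g-\hat{s}^N\|=\|\hat{v}^N\|,\quad \hat{v}^N:=\frac{1}{N}\sum_{k=1}^N\bar{\nabla}d_\rho(\lambda^k),
\end{equation*}
which reduces the feasibility analysis to controlling the single quantity $\|\hat{v}^N\|$.

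I would bound $\|\hat{v}^N\|$ via the \textbf{DGM} projection step. The optimality condition of the projection at iterate $k$ reads $\lambda^{k+1}-\lambda^k=\frac{1}{2L_{\text{d}}}\bar{\nabla}d_\rho(\lambda^k)-n^{k+1}$ with $n^{k+1}$ in the normal cone to $\mathcal{K}_d$ at $\lambda^{k+1}$. Telescoping gives $\frac{N}{2L_{\text{d}}}\hat{v}^N=(\lambda^{N+1}-\lambda^1)+\sum_{k=1}^N n^{k+1}$. For $\mathcal{K}_d=\rset^p$ (augmented case) the normal-cone terms vanish; for $\mathcal{K}_d=\rset^p_+$ a componentwise argument shows they have a sign that cannot inflate the feasibility violation (only $[\hat{v}^N]_+$ matters, since the constraint is $Gu+g\le 0$ and $\hat{s}^N\le 0$). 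Combining with \eqref{bound_seq_dgm}, which gives $\|\lambda^{N+1}-\lambda^1\|\le 2R_d+\sqrt{N\ein/L_{\text{d}}}$, I obtain $\|\hat{v}^N\|\le O(L_{\text{d}} R_d/N)+O(\sqrt{L_{\text{d}}\ein/N})$, which under \eqref{choose_ein} collapses to $O(\epsilon/R_d)$; hence $\text{dist}_{\mathcal{K}}(G\hat{u}^N+g)\le O(\epsilon)$.

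The two-sided suboptimality bound then follows from two complementary inequalities. For the upper bound, $\bar{d}_\rho(\lambda^k)\le d_\rho(\lambda^k)+\ein\le F^*+\ein$; expanding $\bar{d}_\rho(\lambda^k)=F(\bar{u}^k)+\langle\lambda^k,\bar{\nabla}d_\rho(\lambda^k)\rangle+\frac{\rho}{2}\|\bar{\nabla}d_\rho(\lambda^k)\|^2$, averaging over $k$, and using $F(\hat{u}^N)\le\frac{1}{N}\sum_k F(\bar{u}^k)$ by convexity of $F$ yields
\begin{equation*}
F(\hat{u}^N)-F^*\le\ein-\frac{1}{N}\sum_{k=1}^N\langle\lambda^k,\bar{\nabla}d_\rho(\lambda^k)\rangle-\frac{\rho}{2N}\sum_{k=1}^N\|\bar{\nabla}d_\rho(\lambda^k)\|^2.
\end{equation*}
Telescoping \eqref{rel_seq1} with $\lambda=\lambda^*$ (and exploiting weak duality to drop the nonpositive $d_\rho(\lambda^{k+1})-F^*$ terms) controls $\sum_k\langle\bar{\nabla}d_\rho(\lambda^k),\lambda^*-\lambda^k\rangle\le L_{\text{d}} R_d^2+N\ein$, and Cauchy--Schwarz on $\langle\lambda^*,\hat{v}^N\rangle$ then delivers $F(\hat{u}^N)-F^*\le 2\ein+L_{\text{d}} R_d^2/N+R_d\|\hat{v}^N\|=O(\epsilon)$. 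For the matching lower bound I use the saddle-point inequality $F^*=\min_{u,s}\mathcal{L}_\rho(u,\lambda^*)\le\mathcal{L}_\rho(\hat{u}^N,\lambda^*)$ evaluated at the slack $s=[G\hat{u}^N+g]_{\mathcal{K}}$; combined with Cauchy--Schwarz and the feasibility bound from the first step, this yields $F(\hat{u}^N)\ge F^*-R_d\,\text{dist}_{\mathcal{K}}(G\hat{u}^N+g)-\frac{\rho}{2}\text{dist}_{\mathcal{K}}(G\hat{u}^N+g)^2\ge F^*-O(\epsilon)$.

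The main obstacle is the feasibility step: the telescoping of the projection produces the uncancelled residuals $\sum n^{k+1}$, and one must carefully argue that these cannot inflate $\|\hat{v}^N\|$ beyond the trajectory term $\|\lambda^{N+1}-\lambda^1\|$ bounded by \eqref{bound_seq_dgm}. The other subtlety is matching the two inexactness scales that accumulate linearly in the trajectory estimate $\sqrt{N\ein/L_{\text{d}}}$ against the $O(L_{\text{d}}/N)$ decay coming from averaging, which is precisely what forces the scaling $\ein=\Theta(\epsilon)$ and $N=\Theta(L_{\text{d}} R_d^2/\epsilon)$ in \eqref{choose_ein} for \textbf{DGM}. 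Once $\|\hat{v}^N\|=O(\epsilon)$ is secured, the two suboptimality bounds are routine applications of weak duality and the saddle-point inequality.
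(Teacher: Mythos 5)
Your proposal is correct and follows essentially the same route as the paper: primal infeasibility via telescoping the projected dual gradient step (your normal-cone residuals $n^{k+1}$ are exactly the paper's projection residuals $z^j\in\mathcal{K}$, which the paper absorbs into the comparison point inside $\mathcal{K}$ rather than discarding by a componentwise sign argument), the upper suboptimality bound via the identity $\bar d_\rho(\lambda)-\langle\bar\nabla d_\rho(\lambda),\lambda\rangle=F(\bar u(\lambda))+\frac{\rho}{2}\norm{\bar\nabla d_\rho(\lambda)}^2$ combined with the telescoped recursion \eqref{rel_seq1}, and the lower bound via weak duality at $\lambda^*$ together with the feasibility estimate. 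The only substantive deviation is that in the lower bound you minorize $F^*$ by the \emph{augmented} Lagrangian and hence pick up an extra $\frac{\rho}{2}\,\text{dist}_{\mathcal{K}}(G\hat u^N+g)^2$ term, which is harmless only because $\rho=\mathcal{O}(1/\epsilon)$, whereas the paper uses the ordinary Lagrangian minorant $F^*\le F(\hat u^N)+\langle\lambda^*,G\hat u^N+g-[G\hat u^N+g]_{\mathcal{K}}\rangle$ and so stays independent of $\rho$ (and likewise takes $\lambda=0$ rather than $\lambda=\lambda^*$ in the telescoping, avoiding the extra Cauchy--Schwarz step on $\langle\lambda^*,\hat v^N\rangle$).
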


\begin{proof}
First, we derive sublinear estimates for   primal infeasibility for
the average primal sequence $\hat{u}^k_{\epsilon}$ (recall that in
this case  $\hat{u}^k_{\epsilon} =
\frac{1}{k+1}\sum\limits_{j=0}^{k} \bar{u}^j$). Given the definition
of $\lambda^{j+1}$ in Algorithm \textbf{DFOM}($d_{\rho}, \calK_d$)
with $\theta_j = 1$, we get:
\[   \lambda^{j+1} = \left[ \lambda^j + \frac{1}{2L_{\text{d}}} \bar{\nabla} d_{\rho}(\lambda^j) \right]_{\mathcal{K}_d}  \quad \forall j \geq 0.\]
Subtracting $\lambda^j$ from both sides, adding up the above
inequality for $j=0$ to $j=k$, we obtain:
\begin{align}\label{bound_feas}
\left\|\frac{1}{k+1}\sum_{j=0}^k \left[\lambda^j + \frac{1}{2L_d}
\bar{\nabla} d_{\rho}(\lambda^j) \right]_{\mathcal{K}_d} -\lambda^j
\right\| = \frac{1}{k+1}\norm{\lambda^{k+1} - \lambda^0} .
\end{align}
If we denote $z^j =  \lambda^j + \frac{1}{2L_d}\bar{\nabla}
d_{\rho}(\lambda^j)  - \left[ \lambda^j + \frac{1}{2L_d}\bar{\nabla}
d_{\rho}(\lambda^j) \right]_{\mathcal{K}_d} $, then we observe that
$z^j \in \mathcal{K}$. Thus, we have
$\frac{2L_d}{k+1}\sum\limits_{j=0}^{k} z^j \in  \mathcal{K}$. Using
the definition of  $ \bar{\nabla} d_{\rho}(\lambda^j)$, we obtain:
\begin{align*}
 \text{dist}_{\mathcal{K}}(G\hat{u}^k_{\epsilon} + g) &\le
 \left\|  \frac{1}{k+1}\sum_{j=0}^k (G \bar{u}^j + g)
 - \frac{1}{k+1}\sum\limits_{j=0}^k \left(2L_d z^j +
  s(\bar{u}^j,\lambda^j) \right)  \right\| \nonumber  \\
  & = \left\| \frac{1}{k+1} \sum \limits_{j=0}^k (\bar{\nabla} d_{\rho}(\lambda^j)
   - 2L_d z^j) \right\|
 \overset{\eqref{bound_feas}}{=} \frac{2 L_{\text{d}}}{k+1}\norm{\lambda^{k+1} - \lambda^0}.
\end{align*}
Using $\norm{\lambda^k - \lambda^0} \le \norm{\lambda^k - \lambda^*}
+ R_d$  and the bound \eqref{bound_seq_dgm} for the values $\ein$
and $k = k_{\text{out}}$ from \eqref{choose_ein} in the previous
inequality, we get:
\begin{equation}\label{feasibility_final}
 \text{dist}_{\mathcal{K}}(G\hat{u}^{k_{\text{out}}}_{\epsilon}+g) \le
\frac{4L_{\text{d}} R_{\text{d}}}{k_{\text{out}}} +
 2\sqrt{\frac{L_{\text{d}}\ein}{k_{\text{out}}}} \le \frac{\epsilon}{R_d}.
\end{equation}

\noindent It remains to estimate the primal suboptimality. First, to
bound below $F(\hat{u}^{k_{\text{out}}}_{\epsilon}) - F^*$ we
proceed as follows:
\begin{align*}
 F^* &= \min\limits_{u \in U, s \in \mathcal{K}} F(u) + \langle \lambda^*,
 Gu+g -s \rangle \nonumber\\
& \le F(\hat{u}^k_{\epsilon}) + \langle \lambda^*, G\hat{u}^k_{\epsilon} +
 g - \left[G\hat{u}^k_{\epsilon} + g\right]_{\mathcal{K}} \rangle \nonumber\\
& \le F(\hat{u}^k_{\epsilon}) + \norm{\lambda^*} \norm{G\hat{u}^k_{\epsilon}+g -
 \left[G\hat{u}^k_{\epsilon} + g\right]_{\mathcal{K}}}\nonumber\\
& = F(\hat{u}^k_{\epsilon}) + R_d \;  \text{dist}_{\mathcal{K}}
\left(G\hat{u}^k_{\epsilon} + g\right).
 \end{align*}
Combining the last inequality  with \eqref{feasibility_final}, we
obtain:
\begin{equation}\label{left_subopt}
 - \epsilon \le  F(\hat{u}^{k_{\text{out}}}_{\epsilon}) - F^*.
\end{equation}
Secondly, we observe the following facts: for any $u\in U$,
$d_{\rho}(\lambda) \le F^*$ and the following identity holds:
\begin{align}\label{identity_lag}
 \bar{d}_{\rho}(\lambda) - \langle \bar{\nabla} d_{\rho}(\lambda), \lambda \rangle = F(\bar{u}(\lambda)) + \frac{\rho}{2}\norm{\bar{\nabla} d_{\rho}(\lambda)}^2 \ge F(\bar{u}(\lambda)).
\end{align}
\noindent Based  on previous discussion,  \eqref{rel_seq1} and
\eqref{identity_lag}, we derive that
\begin{align*}
& \norm{\lambda^{k+1} - \lambda}^2 \\
& \overset{\eqref{rel_seq1}}{\le} \norm{\lambda^k - \lambda}^2 + \frac{1}{L_{\text{d}}}
 \left( d_{\rho}(\lambda^{k+1}) - \bar{d}(\lambda^k) + \langle \bar{\nabla} d_{\rho}(\lambda^k), \lambda^k - \lambda \rangle + \ein  \right)\\
 & \overset{\eqref{identity_lag}}{\le} \norm{\lambda^k - \lambda}^2 + \frac{1}{L_{\text{d}}}\left( F^* - F(\bar{u}^k) - \frac{\rho}{2}\norm{\bar{\nabla} d_{\rho}(\lambda^k)}^2 + \ein - \langle
\bar{\nabla} d_{\rho}(\lambda^k), \lambda \rangle   \right).
 \end{align*}
Taking now $\lambda=0$, $k = k_{\text{out}}$ and using an  inductive
argument, we obtain:
\begin{equation}\label{right_subopt}
 F(\hat{u}^{k_{\text{out}}}) - F^* \le
 \frac{L_{\text{d}}\norm{\lambda^0}^2}{k_{\text{out}} } +
 \ein =  \frac{\epsilon}{4},
\end{equation}
provided that $\lambda^0=0$.  From \eqref{feasibility_final},
\eqref{left_subopt} and \eqref{right_subopt}, we obtain that the
average primal iterate  $\hat{u}^{k_{\text{out}}}_{\epsilon}$ is
$\epsilon$-primal optimal.  \qed
\end{proof}

\noindent Further, we analyze the primal convergence rate  of
algorithm \textbf{DFGM} in the average primal iterate
$\hat{u}_{\epsilon}^k$:

\begin{theorem}
Let $\epsilon> 0$ be some desired accuracy and
$\hat{u}_{\epsilon}^k$ be the primal average iterate given in
\eqref{average_primal}, generated  by algorithm \textbf{DFGM}, i.e.
Algorithm \textbf{DFOM}($d_{\rho}, \mathcal{K}_d$) with
$\theta_{k+1} = \frac{1 + \sqrt{1 + 4 \theta_k^2}}{2}$ for all $k
\geq0$, using the inner accuracy from \eqref{choose_ein}. Then,
after $k_{\text{out}}$
 number of outer iterations given in \eqref{choose_ein},
$\hat{u}^{k_{\text{out}}}_{\epsilon}$ is $\epsilon$-primal optimal
for the original QP \eqref{original_primal}.
\end{theorem}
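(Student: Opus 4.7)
The plan is to parallel the DGM proof but replace the elementary recursion \eqref{rel_seq1}--\eqref{bound_seq_dgm} with Lemma~\ref{th_tseng_2}, exploiting the combinatorial identity $\theta_k^2 = \sum_{j=1}^k \theta_j = S_k$, which follows from the defining recursion $\theta_{k+1}^2 - \theta_{k+1} = \theta_k^2$ and $\theta_1=1$. The argument splits into three pieces: an upper bound on $F(\hat{u}_\epsilon^k)-F^*$ obtained from Lemma~\ref{th_tseng_2} at $\lambda=0$; a feasibility bound on $\text{dist}_\calK(G\hat{u}_\epsilon^k+g)$ obtained via a telescoping identity in the fast-gradient sequence; and the matching lower bound on $F(\hat{u}_\epsilon^k)-F^*$ derived from the infeasibility estimate through the saddle-point inequality $F^* \le F(u) + \langle \lambda^*, Gu+g - [Gu+g]_\calK\rangle$ already employed in \eqref{aux_subopt_left}.

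For the first piece, I would combine identity \eqref{identity_lag}, namely $\bar{d}_\rho(\mu^j) - \langle \bar{\nabla} d_\rho(\mu^j),\mu^j\rangle \ge F(\bar{u}^j)$, with convexity of $F$ and the relation $\bar{\nabla} d_\rho(\mu^j) = G\bar{u}^j + g - s(\bar{u}^j,\mu^j)$ to rewrite, for every $\lambda$,
\[
\sum_{j=1}^{k}\theta_{j}\Delta(\lambda,\mu^{j}) \;\ge\; S_{k}\bigl[F(\hat{u}_\epsilon^k)-d_\rho(\lambda)\bigr] + S_k\bigl\langle G\hat{u}_\epsilon^k + g - \hat{s}^k,\;\lambda\bigr\rangle,
\]
where $\hat{s}^k = S_k^{-1}\sum_j \theta_j\, s(\bar{u}^j,\mu^j)\in\calK$ (convex combination in the cone). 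Plugging into Lemma~\ref{th_tseng_2} with $\lambda = \lambda^0 = 0$ and using $\theta_k^2 = S_k$ gives $S_k[F(\hat{u}_\epsilon^k) - d_\rho(\lambda^k)] + L_d\|l^k\|^2 \le 2\sum_j \theta_j^2\, \ein$; together with $d_\rho(\lambda^k)\le F^*$ and $\sum\theta_j^2 \le \theta_k S_k$, this yields $F(\hat{u}_\epsilon^k) - F^* \le 2\theta_k\ein$, which is of order $\epsilon$ at $k=k_{\text{out}}$ under \eqref{choose_ein}.

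For the feasibility bound, I would use the Tseng-style identity $l^j - l^{j-1} = \theta_j(\lambda^j - \mu^j)$ (a direct consequence of the $\beta_k$ update in Algorithm~\textbf{DFOM}) together with the Moreau decomposition $\lambda^j - \mu^j = \frac{1}{2L_d}\bar{\nabla} d_\rho(\mu^j) - z^j$ with $z^j\in\calK$. Weighted summation then telescopes to $\sum\theta_j \bar{\nabla} d_\rho(\mu^j) = 2L_d(l^k - l^0) + 2L_d\sum\theta_j z^j$; since $\hat{s}^k + 2L_d S_k^{-1}\sum\theta_j z^j$ still belongs to $\calK$, dividing by $S_k$ produces
\[
\text{dist}_\calK(G\hat{u}_\epsilon^k+g) \;\le\; \frac{2L_d}{S_k}\,\|l^k - l^0\|.
\]
Applying Lemma~\ref{th_tseng_2} at $\lambda=\lambda^*$ gives $\|l^k - \lambda^*\|^2 \le R_d^2 + 2L_d^{-1}\sum\theta_j^2\ein$, hence $\|l^k\| \le 2R_d + \mathcal{O}\bigl(\sqrt{\sum\theta_j^2\ein/L_d}\bigr)$; substituting $k_{\text{out}}$ and $\ein$ from \eqref{choose_ein} gives $\text{dist}_\calK(G\hat{u}_\epsilon^{k_{\text{out}}}+g)\le\mathcal{O}(\epsilon/R_d)$. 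The saddle-point inequality then delivers $F(\hat{u}_\epsilon^{k_{\text{out}}}) - F^* \ge -R_d\cdot\mathcal{O}(\epsilon/R_d) = -\mathcal{O}(\epsilon)$, completing the primal optimality statement.

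The main technical obstacle is the telescoping step: one must carefully verify that the projection residuals $z^j$ lie in $\calK$ (which follows from the Moreau decomposition on the polar pair $(\calK_d,\calK)$ in both the equality and inequality cases of this paper), so that the weighted average $\hat{s}^k + 2L_d S_k^{-1}\sum\theta_j z^j$ stays inside $\calK$ and absorbs the feasible part of $G\hat{u}_\epsilon^k+g$, leaving only $\|l^k - l^0\|$ to be controlled by Lemma~\ref{th_tseng_2}. Once this is established, the remaining steps are routine substitutions of the values of $k_{\text{out}}$ and $\ein$ from \eqref{choose_ein}, and the bookkeeping $\sum_{j=1}^k\theta_j^2 \le \theta_k S_k$ based on monotonicity of $\theta_k$.
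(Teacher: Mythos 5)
Your proposal is correct and follows essentially the same route as the paper's own proof: the telescoping identity $l^j-l^{j-1}=\theta_j(\lambda^j-\mu^j)$ with the projection residuals absorbed into $\calK$ for feasibility, Lemma~\ref{th_tseng_2} at $\lambda=\lambda^*$ to control $\|l^k-\lambda^*\|$, Lemma~\ref{th_tseng_2} at $\lambda=0$ combined with a weighted lower bound on $\sum_j\theta_j\Delta(\lambda,\mu^j)$ for the suboptimality upper bound, and the saddle-point inequality for the lower bound. The only cosmetic difference is that you lower-bound $\sum_j\theta_j\Delta$ directly through convexity of $F$ plus a linear term, whereas the paper passes through $\mathcal{L}_\rho(\hat{u}^k_\epsilon,\lambda)-d_\rho(\lambda)$ via convexity of $\mathcal{L}_\rho(\cdot,\lambda)$; at $\lambda=0$ these coincide.
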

\begin{proof}
Recall that we have defined  $S_k = \sum\limits_{j=0}^k \theta_k$.
Then, it follows:
\begin{equation}\label{fg_step}
 \frac{k+1}{2} \le \theta_k \le k \qquad \text{and} \qquad S_k = \theta_k^2.
\end{equation}
\noindent For any $j \ge 0$ we denote $z^j = \mu^{j} +
\frac{1}{2L_{\text{d}}}  \bar{\nabla} d_{\rho}(\mu^{j})$ and thus we
have $\lambda^j = \left[ z^j \right]_{\mathcal{K}_d}$. In these
settings, we have the following relations:
\begin{align} \label{feasibility_aux1}
\theta_j & \left( \frac{1}{2L_{\text{d}}} \bar{\nabla} d_{\rho}(\mu^j)- (z^j - [z^j]_{\mathcal{K}_d})\right) \nonumber\\
& = \theta_j \left(\left[ \mu^{j} + \frac{1}{2L_{\text{d}}} \bar{\nabla} d_{\rho}(\mu^{j}) \right]_{\mathcal{K}_d} - \lambda^{j}\right) \nonumber\\
& =  \theta_{j}(\lambda^{j} - \mu^{j})\nonumber\\
& = \theta_{j}(\lambda^{j} - \lambda^{j-1}) + (\theta_{j-1} -1)( \lambda^{j-2} - \lambda^{j-1})\nonumber\\
& = \underbrace{\lambda^{j-1} + \theta_{j}(\lambda^{j} -
\lambda^{j-1})}_{=l^{j}} - \underbrace{(\lambda^{j-2} +
\theta_{j-1}(\lambda^{j-1} - \lambda^{j-2}))}_{=l^{j-1}}.
\end{align}
\noindent For simplicity consider $\lambda^{-2} = \lambda^{-1} = \lambda^0$ and $\theta_{-1} = \theta_0 = 0$.
Adding up the above equality for $j = 0$ to $j = k$,
multiplying by $\frac{2L_{\text{d}}}{S_k}$ and
observing that $ s(\bar{u}^j,\mu^j) + z^j - [z^j]_{\mathcal{K}_d} \in \mathcal{K}$ for all $j \ge 0 $, we obtain:
\begin{align*}
\text{dist}_{\mathcal{K}}\left(G\hat{u}^k_{\epsilon} + g \right)
&\le \left\| \sum\limits_{j=0}^k  \frac{\theta_j}{S_k}  \left( G \bar{u}^j + g -
 s(\bar{u}^j,\mu^j) - 2 L_{\text{d}}(z^j-[z^j]_{\mathcal{K}_d}) \right)\right\| \\
& =  \left\| \sum\limits_{j=0}^k  \frac{\theta_j}{S_k} \left( \bar{\nabla} d_{\rho}(\mu^j) -
2 L_{\text{d}}(z^j-[z^j]_{\mathcal{K}_d}) \right)\right\| \\
& \overset{\eqref{feasibility_aux1}}{=} \frac{L_\text{d}}{S_k}\norm{l^{k}-l^0} \le \frac{4L_\text{d}}{(k+1)^2} \norm{l^k -l^{0}}.
\end{align*}
\noindent Taking $\lambda =\lambda^*$ in Lemma \ref{th_tseng_2} and
using that the two  terms $\theta_{k}^2 (F^* -
d_{\rho}(\lambda^{k}))$ and $\sum\limits_{j=0}^{k}\theta_{j}
 \Delta(\lambda^*,\mu^j)$ are positive, we get:
\begin{align*}
\| l^k - \lambda^* \| &\le \sqrt{\| \lambda^0 - \lambda^*\|^2 +
\sum\limits_{i=1}^{k} \frac{2\theta_i^2\ein}{L_{\text{d}}}} \le \| \lambda^0 - \lambda^*\| + \sqrt{\frac{8\ein}{3L_{\text{d}}} (k+1)^3}\\
& \le \| \lambda^0 - \lambda^* \| +
\left(\frac{8\ein}{3L_{\text{d}}}\right)^{1/2} (k+1)^{3/2} \qquad
\forall k \geq 0.
\end{align*}

\noindent Thus, we can further bound the primal infeasibility as
follows:
\begin{align}
\text{dist}_{\mathcal{K}}\left(G\hat{u}^k_{\epsilon} + g \right)
&\le \frac{4L_\text{d}}{(k+1)^2} \|l^k - l^0\|
\le \frac{4L_\text{d}}{(k+1)^2}(\|l^k - \lambda^*\| + R_d) \nonumber\\
& \le \frac{8L_\text{d}R_{\text{d}}}{(k+1)^2}  +
 8 \left( \frac{L_d \ein }{k+1} \right)^{1/2}.
\label{infes_av}
\end{align}
Therefore, using $k_{\text{out}}$ and $\ein$ from
\eqref{choose_ein}, it can be derived  that:
\begin{equation}\label{infes_av_final}
 \text{dist}_{\mathcal{K}}(G \hat{u}^{k_{\text{out}}}_{\epsilon} + g) \le \frac{8L_{\text{d}} R_{\text{d}}}{k_{\text{out}}^2} + 8 \left(\frac{L_d \ein}{k_{\text{out}}} \right)^{1/2} \le \frac{3 \epsilon}{R_d}.
\end{equation}
\noindent Further, we derive sublinear  estimates for primal
suboptimality. First, note the following relations:
\begin{align*}
\Delta(\lambda, & \mu^{k}) =  \bar{d}_{\rho}(\mu^{k}) + \langle \bar{\nabla} d_{\rho}(\mu^{k}),
\lambda - \mu^{k}\rangle - d_{\rho}(\lambda) \\
&= \mathcal{L}_{\rho}(\bar{u}^{k},\mu^{k}) + \langle \bar{\nabla}d_{\rho}(\mu^{k}) , \lambda - \mu^{k}\rangle
- d_{\rho} (\lambda) \\
& = F(\bar{u}^k) + \langle \lambda, G\bar{u}^k + g - s(\bar{u}^k,\mu^k) \rangle + \frac{\rho}{2}\norm{G \bar{u}^k +g - s(\bar{u}^k,\mu^k)}^2 - d_{\rho}(\lambda)\\
& \ge \min\limits_{s \in \calK} \; \; F(\bar{u}^k) + \langle \lambda, G\bar{u}^k + g - s\rangle + \frac{\rho}{2}\norm{G \bar{u}^k +g - s}^2 - d_{\rho}(\lambda)\\
& = \mathcal{L}_{\rho}(\bar{u}^{k},\lambda) - d_{\rho} (\lambda).
 \end{align*}
\noindent Summing on the history  and using  the convexity of
$\mathcal{L}_{\rho}(\cdot,\lambda)$, we get:
\begin{align}
\sum\limits_{j=0}^{k}&\theta_j \Delta(\lambda,\mu^j)
\ge \sum\limits_{j=1}^{k}\theta_j ( \mathcal{L}_{\rho}(\bar{u}^{j},\lambda) - d_{\rho}(\lambda))\nonumber\\
&\ge S_{k} \left( \mathcal{L}_{\rho} (\hat{u}^{k}_{\epsilon},\lambda) -d_{\rho} (\lambda)\right)
= \theta_{k}^2 \left( \mathcal{L}_{\rho} (\hat{u}^{k}_{\epsilon} , \lambda) - d_{\rho} (\lambda)\right).
\label{sum_theta_aux_ag}
\end{align}
Using \eqref{sum_theta_aux_ag} in Lemma \ref{th_tseng_2}, and
dropping the term $ L_{\text{d}}\norm{l^{k}- \lambda}^2$, we have:
\begin{align}\label{subopt_right_aux}
F(\hat{u}^{k}_{\epsilon}) + \langle G\hat{u}^{k}_{\epsilon} +g - s(\hat{u}^k_{\epsilon},\lambda),& \lambda \rangle - d_{\rho}(\lambda^{k})
\le \frac{L_\text{d}}{\theta_{k}^2}\norm{\lambda^0- \lambda}^2 + \frac{2\sum\limits_{j=0}^{k} \theta_j^2}{\theta_{k}^2} \ein.
\end{align}
Moreover, we have that:
\[\frac{1}{\theta_{k}^2}\sum\limits_{j=0}^{k} \theta_j^2 =
\frac{1}{S_{k}} \sum\limits_{j=0}^{k} \theta_j^2 \le
\max\limits_{0\le j \le k} \theta_j \le k \quad \text{and} \quad
d_{\rho} (\lambda^{k}) \le F^*. \]  Now, by choosing the Lagrange
multiplier $\lambda = 0$ and $k = k_{\text{out}}$ in
\eqref{subopt_right_aux}, we have:
\begin{align}\label{subopt_right_av}
F(&\hat{u}^{k_{\text{out}}}_{\epsilon}) - F^* \le
F(\hat{u}^{k_{\text{out}}}_{\epsilon}) - d_{\rho}
(\lambda^{k_{\text{out}}}) \le
\frac{2L_\text{d}R_{\text{d}}^2}{k^2_{\text{out}}} +
2k_{\text{out}}\ein \le \frac{5\epsilon}{4}.
\end{align}

\noindent On the other hand, we have:
\begin{align*}
F^* &=  \min_{u \in U, s \in \mathcal{K}} F(u) + \langle \lambda^*, G u + g -s \rangle \leq F(\hat{u}^k_{\epsilon}) + \langle \lambda^*, G \hat{u}^k_{\epsilon} +g - \left[G \hat{u}^k_{\epsilon} + g \right]_{\mathcal{K}} \rangle\nonumber\\
& \le F(\hat{u}^k_{\epsilon}) + R_d \; \text{dist}_{\calK}(G
\hat{u}^k_{\epsilon} + g).
\end{align*}
Taking $k  = k_{\text{out}}$ and $\ein$ from \eqref{choose_ein}, and using \eqref{infes_av_final}, we obtain:
\begin{equation}\label{subopt_left_final}
   -3 \epsilon \le F(\hat{u}^{k_{\text{out}}}_{\epsilon}) - F^*.
\end{equation}
Finally, from \eqref{infes_av_final}, \eqref{subopt_right_av} and
\eqref{subopt_left_final}, we get  that the primal average sequence
$\hat{u}^{k_{\text{out}}}_{\epsilon}$ is $\epsilon$ primal optimal.
\qed
\end{proof}

\noindent In conclusion, in DuQuad we generate two approximate
primal solutions $\bar{u}_\epsilon^k$ and $\hat{u}_\epsilon^k$ for
each algorithm \textbf{DGM} and \textbf{DFGM}. From previous
discussion it can be seen that theoretically, the average of primal
iterates sequence $\hat{u}_\epsilon^k$  has a better behavior than
the last iterate sequence $\bar{u}_\epsilon^k$. On the other hand,
from our practical experience (see also Section
\ref{numerical_tests}) we have observed that usually dual first
order  methods are converging faster in the primal last iterate than
in a primal average sequence. Moreover, from our unified analysis we
can conclude that for both approaches, ordinary dual with  $Q \succ
0$ and augmented dual with $Q \succeq 0$, the rates of convergence
of algorithm \textbf{DFOM} are the same.


\section{Total computational complexity in  DuQuad}
\noindent In this section we derive the total computational
complexity of the algorithmic framework  \textbf{DFOM}. Without lose
of generality, we make the assumptions: $ R_d>1, \epsilon<1, \lambda_{\text{max}}(Q) \ge \norm{G}^2.$
However, if any of these assumptions does not hold, then our result
are still  valid with minor changes in constants. Now, we are ready
to derive the total number of iterations for  \textbf{DFOM}, i.e.
the total number of projections on the set $U$ and of matrix-vector
multiplications $Qu$ and $G^T \lambda$.

\begin{theorem}\label{in:th_last}
Let $\epsilon> 0$ be some desired accuracy and  the inner accuracy
$\ein$ and the number of outer iterations $k_{\text{out}}$ be as in
\eqref{choose_ein}.  By setting $\rho = \frac{8 R_d^2}{\epsilon}$
and assuming that the primal iterate $\bar{u}^k$ is obtained by
running the Algorithm \textbf{FOM}($\mathcal{L}_{\rho}(\cdot,
\mu^k), U$), then $\bar{u}_{\epsilon}^k$ ($\hat{u}_{\epsilon}^k$ )
is $\sqrt{\epsilon}$ ($\epsilon$) primal optimal after a total
number of projections on the set $U$ and of matrix-vector
multiplications $Qu$ and $G^T \lambda$ given by:
\begin{equation*}
 k_{\text{total}} = \begin{cases}
\left\lfloor \frac{24 \norm{G} D_U R_d}{\epsilon} \right\rfloor
&\text{if} \quad \sigma_{\mathcal{L}} = 0 \\
\left\lfloor \frac{16 \norm{G}
R_d}{\sqrt{\sigma_{\mathcal{L}}\epsilon }}  \log \left( \frac{8
\norm{G} D_U R_d}{\epsilon} \right) \right\rfloor &\text{if} \quad
\sigma_{\mathcal{L}} > 0.
\end{cases}
\end{equation*}
\end{theorem}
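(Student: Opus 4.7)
The plan is to compute the total work as $k_{\text{total}} = k_{\text{out}} \cdot k_{\text{in}}$, where $k_{\text{in}}$ is the number of inner iterations of Algorithm \textbf{FOM}$(\mathcal{L}_\rho(\cdot,\mu^k),U)$ needed per outer update to guarantee the inner accuracy \eqref{duquad_in}, and $k_{\text{out}}$ is the outer iteration count from \eqref{choose_ein}. The key simplification driving the final bound comes from the choice $\rho = 8 R_d^2/\epsilon$, which collapses the outer cost: since $L_{\text{d}} = \|G\|^2/(\lambda_{\min}(Q) + \rho\|G\|^2) \leq 1/\rho$, we get $L_{\text{d}} R_d^2 \leq \epsilon/8$, which upon substitution into \eqref{choose_ein} yields $k_{\text{out}} = \mathcal{O}(1)$ for both \textbf{DGM} and \textbf{DFGM}. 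Likewise, the same substitution into the \textbf{DFGM} expression for $\ein$ gives $\ein = \Theta(\epsilon)$ in both schemes, so I do not need to separately track the DGM/DFGM cases at the outer level.

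Next I would bound $k_{\text{in}}$ by applying Lemma~\ref{lemma_sublin_dfg} to the inner problem \eqref{ul}, using that $\mathcal{L}_\rho(\cdot,\mu^k)$ has Lipschitz constant $L_{\mathcal{L}} = \lambda_{\max}(Q) + \rho\|G\|^2$, strong convexity constant $\sigma_{\mathcal{L}} = \lambda_{\min}(Q + \rho G^T G)$, and that $\mathcal{R}_{\mathcal{L}} \leq D_U$. Under the working assumption $\lambda_{\max}(Q) \geq \|G\|^2$ combined with $\rho = 8 R_d^2/\epsilon \gg 1$ (since $R_d > 1$, $\epsilon < 1$), one has $L_{\mathcal{L}} \leq 2\rho\|G\|^2 = 16 R_d^2 \|G\|^2/\epsilon$. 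Splitting cases:
\begin{itemize}
\item If $\sigma_{\mathcal{L}}=0$, I run \textbf{FGM} inside and invoke the sublinear branch $\phi(x^k)-\phi^* \leq 2 L_\phi \mathcal{R}_\phi^2/(k+1)^2$ of \eqref{bound_gen_dfg1}. Enforcing the RHS $\leq \ein = \Theta(\epsilon)$ gives
\[k_{\text{in}} \leq \sqrt{2 L_{\mathcal{L}} D_U^2 / \ein} \leq \mathcal{O}\!\left(\sqrt{\rho\|G\|^2 D_U^2/\epsilon}\right) = \mathcal{O}\!\left(\|G\| D_U R_d/\epsilon\right).\]
\item If $\sigma_{\mathcal{L}} > 0$, I run \textbf{FGM}$_{\sigma}$ inside and use the linear branch $(1-\sqrt{\sigma_\phi/L_\phi})^{k-1} L_\phi \mathcal{R}_\phi^2$, which after taking logarithms gives $k_{\text{in}} \leq \mathcal{O}\!\left(\sqrt{L_{\mathcal{L}}/\sigma_{\mathcal{L}}} \log(L_{\mathcal{L}} D_U^2/\ein)\right) = \mathcal{O}\!\left(\|G\| R_d/\sqrt{\sigma_{\mathcal{L}} \epsilon} \cdot \log(\|G\| D_U R_d/\epsilon)\right)$.
\end{itemize}

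Multiplying $k_{\text{out}} = \mathcal{O}(1)$ by $k_{\text{in}}$ yields the two displayed bounds. The primal-accuracy claim ($\sqrt{\epsilon}$ for $\bar u^k_\epsilon$ and $\epsilon$ for $\hat u^k_\epsilon$) is then inherited directly from the two primal convergence theorems of Sections on the last and average iterates, since they already require exactly the $\ein$ and $k_{\text{out}}$ from \eqref{choose_ein} that we are using here.

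The main obstacle I anticipate is keeping the constants honest rather than merely $\mathcal{O}(\cdot)$, in particular (i) verifying that $L_{\mathcal{L}} \leq 2\rho\|G\|^2$ holds under the stated standing assumptions so that the bound does not carry a $\lambda_{\max}(Q)$ term, and (ii) checking that after substituting the \emph{exact} expressions for $\ein$ and $L_d$ from \eqref{choose_ein} with $\rho = 8 R_d^2/\epsilon$, the numeric constants $24$ and $16$ in the two cases come out correctly (the $\log$ argument $8\|G\|D_U R_d/\epsilon$ in the strongly convex case in particular requires carefully tracking the constant hidden in $\ein = \Theta(\epsilon)$ and in $L_{\mathcal{L}} D_U^2/\ein$). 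Everything else is a routine assembly of the ingredients already available in Lemmas~\ref{lemma_sublin_dfg} and the expression \eqref{Ld} for $L_{\text{d}}$.
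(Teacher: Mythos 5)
Your proposal is correct and follows essentially the same route as the paper: decompose $k_{\text{total}}=k_{\text{out}}\cdot k_{\text{in}}$, bound $k_{\text{in}}$ via Lemma~\ref{lemma_sublin_dfg} with $L_{\mathcal{L}}=\lambda_{\max}(Q)+\rho\|G\|^2$ and $\mathcal{R}_{\mathcal{L}}\le D_U$, and combine with the $k_{\text{out}}$ and $\ein$ of \eqref{choose_ein}. The only cosmetic difference is that the paper \emph{derives} $\rho^*=8R_d^2/\epsilon$ by minimizing the product $k_{\text{in}}k_{\text{out}}$ over $\rho$, whereas you substitute the given $\rho$ and observe that $L_d\le 1/\rho$ forces $k_{\text{out}}=\mathcal{O}(1)$ and $\ein=\Theta(\epsilon)$ -- the same computation read in the opposite direction (and your flagged concern about absorbing $\lambda_{\max}(Q)$ into $2\rho\|G\|^2$ is a looseness the paper shares).
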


\begin{proof}
From Lemma \ref{lemma_sublin_dfg} we have that the inner problem
(i.e.  finding the primal iterate $\bar{u}^k$) for a given $\mu^k$
can be solved in sublinear (linear) time using Algorithm {\bf
FOM}($\lu_\rho(\mu^k,\cdot),U$),  provided that  the inner problem
has smooth (strongly) convex objective function, i.e.
$\lu_\rho(\mu^k,\cdot)$ has  $\sigma_{\lu} =0$ ($\sigma_{\lu} >0$).
More precisely, from  Lemma \ref{lemma_sublin_dfg}, it follows that,
regardless if we apply algorithms \textbf{DFGM} or~\textbf{DGM},  we
need to perform the following number of inner iterations  for
finding the primal iterate $\bar{u}^k$ for a given $\mu^k$:
\[ k_{\text{in}} = \begin{cases}
\sqrt{\frac{2 L_{\mathcal{L}} D_U^2}{\ein}}, & \text{if}\;\; \sigma_{\mathcal{L}} =0 \\
\sqrt{\frac{L_{\mathcal{L}} }{\sigma_{\mathcal{L}}}}
\log\left(\frac{L_{\mathcal{L}} D_U^2}{\ein}\right) + 1, &
\text{if}\;\;  \sigma_{\mathcal{L}}>0.
\end{cases} \]
\noindent Combining these estimates with the expressions
\eqref{choose_ein}   for the inner accuracy $\ein$, we obtain, in
the first case $\sigma_{\mathcal{L}} = 0$, the following inner
complexity estimates:
\begin{equation*}
 k_{\text{in}} =  \begin{cases}\left(\frac{8 L_{\mathcal{L}} D_U^2}{\epsilon}\right)^{1/2} & \text{if} \;\; \textbf{DGM} \\
\frac{4 (L_{\mathcal{L}} D_U^2)^{1/2} (2L_d R_d^2)^{1/4} }{\epsilon^{3/4}}, & \text{if} \;\; \textbf{DFGM}.
\end{cases}
\end{equation*}
Multiplying $k_{\text{in}}$ with the number of outer iterations
$k_{\text{out}}$ from \eqref{choose_ein} and minimizing the product
$k_{\text{in}} k_{\text{out}} $ over the smoothing parameter
$\rho$ (recall that $L_{\lu} = \lambda_{\max} (Q) + \rho \|G\|^2$ and $L_{\text{d}}= \frac{\|G\|^2}{\lambda_{\min}(Q) +
\rho\|G\|^2}$), we obtain the following optimal computational complexity
estimate (number of projections on the set $U$ and evaluations of
$Qu$ and $G^T \lambda$):
\begin{equation*}
 k_{\text{total}}^*  = (k_{\text{out}} k_{\text{in}})^* = \frac{24 \norm{G} D_U
 R_d}{\epsilon},
\end{equation*}
which is attained for the optimal parameter choice:
\[ \rho^* = \frac{8 R_d^2}{\epsilon}. \]

\noindent Using the same reasoning for the second  case when
$\sigma_{\mathcal{L}} > 0$, we observe that the value $\rho =
\frac{8 R_d^2}{\epsilon}$ is also optimal for this case in the
following sense: the difference between the estimates obtained with
the exact optimal $\rho$ and the value $\frac{8 R_d^2}{\epsilon}$
are only minor changes in constants. Therefore,  when
$\sigma_{\mathcal{L}} > 0$, the total  computational complexity
(number of projections on the set $U$ and evaluations of $Qu$ and
$G^T \lambda$) is:
\begin{equation*}
k_{\text{total}}^*  = (k_{\text{out}} k_{\text{in}})^* = \frac{16
\norm{G} R_d}{\sqrt{\sigma_{\mathcal{L}}\epsilon }} \log \left(
\frac{8 \norm{G} D_U R_d}{\epsilon} \right).
\end{equation*}
\qed
\end{proof}

\noindent In conclusion,  the last primal iterate
$\bar{u}_{\epsilon}^k$ is $\sqrt{\epsilon}$-primal optimal after
${\cal O} (\frac{1}{\epsilon})$  (${\cal
O}(\frac{1}{\sqrt{\epsilon}} \log \frac{1}{\epsilon})$) total number
of projections on the set $U$ and of matrix-vector multiplications
$Qu$ and $G^T \lambda$, provided that $\sigma_{\mathcal{L}} = 0$
($\sigma_{\mathcal{L}} > 0$).  Similarly,  the average of primal
iterate $\hat{u}_{\epsilon}^k$ is $\epsilon$-primal optimal after
${\cal O} (\frac{1}{\epsilon})$  (${\cal
O}(\frac{1}{\sqrt{\epsilon}} \log \frac{1}{\epsilon})$) total number
of projections on the set $U$ and of matrix-vector multiplications
$Qu$ and $G^T \lambda$, provided that $\sigma_{\mathcal{L}} = 0$
($\sigma_{\mathcal{L}} > 0$). Moreover, the optimal choice for the
parameter $\rho$ is of order ${\cal O}(\frac{1}{\epsilon})$, provided that
$\lambda_{\min}(Q) =0$.


\subsection{What is the main computational bottleneck in DuQuad?}
\noindent Let us analyze now the  computational cost per inner and
outer iteration for Algorithm {\bf DFOM}($d_\rho,\calK^*$) for
solving approximately the original QP \eqref{original_primal}:

\vspace{0.2cm}

\noindent \textbf{Inner iteration}: When we solve the inner problem
with the Nesterov's algorithm {\bf FOM}($\lu_\rho(\mu,\cdot),U$),
the main computational effort is done in computing the gradient of
the augmented Lagrangian $\lu_\rho(\mu,\cdot)$ defined in
\eqref{auglag}, which e.g. has the form:
\[ \nabla \lu_\rho(\mu,u) = (Q + \rho G^T G)u +
(q + G^T \mu + \rho G^T g).  \] In DuQuad these
matrix-vector operations are implemented efficiently  in C (matrices
that do not change along iterations are computed once and only
$G^T \mu$ is computed at each outer iteration). The cost for
computing $\nabla \lu_\rho(\mu,u)$ for general QPs is ${\cal O}
(n^2)$. However, when the matrices $Q$  and $G$ are sparse
(e.g. network utility maximization problem) the cost ${\cal O}
(n^2)$ can be reduced substantially.  The other operations in
Algorithm {\bf FOM}($\lu_\rho(\mu,\cdot),U$) are just vector
operations and thus they are of order ${\cal O} (n)$. Thus, the
dominant operation at the inner stage is the matrix-vector product.

\vspace{0.2cm}

\noindent \textbf{Outer iteration}: When solving the outer (dual)
problem with Algorithm {\bf DFOM}($d_\rho,\calK^*$), the main
computational effort is done in computing the inexact gradient of
the dual function:
\[ \bar{\nabla} d_\rho(\mu) = G \bar{u}(\mu) + g - s(\bar{u}(\mu),\mu).  \]
The cost for computing $\bar{\nabla} d_\rho(\mu)$ for general QPs is
${\cal O} (np)$. However, when the matrix  $G$ is sparse, this cost
can be reduced.  The other operations in Algorithm {\bf
DFOM}($d_\rho,\calK_d$) are of order ${\cal O} (p)$. Thus the
dominant operation  at the outer stage is also the matrix-vector
product.

\noindent Fig. \ref{fig:gprof_n150_dfgm_case1} displays the result
of profiling the code with gprof. In this simulation, a standard QP
with inequality constraints and dimensions $n = 150$ and $p = 225$
was solved by Algorithm \textbf{DFGM}. The profiling summary is
listed in the order of the time spent in each file.  This figure
shows that almost all the time for executing the program is spent in
the  library module \textit{math-functions.c}. Furthermore,
\textit{mtx-vec-mul} is by far the dominating function in this list.
This function is multiplying a matrix with a vector, which is
defined as a special type of matrix multiplication.
\begin{figure}[h!]
\centering
\includegraphics[width=0.45\textwidth,height=6cm]{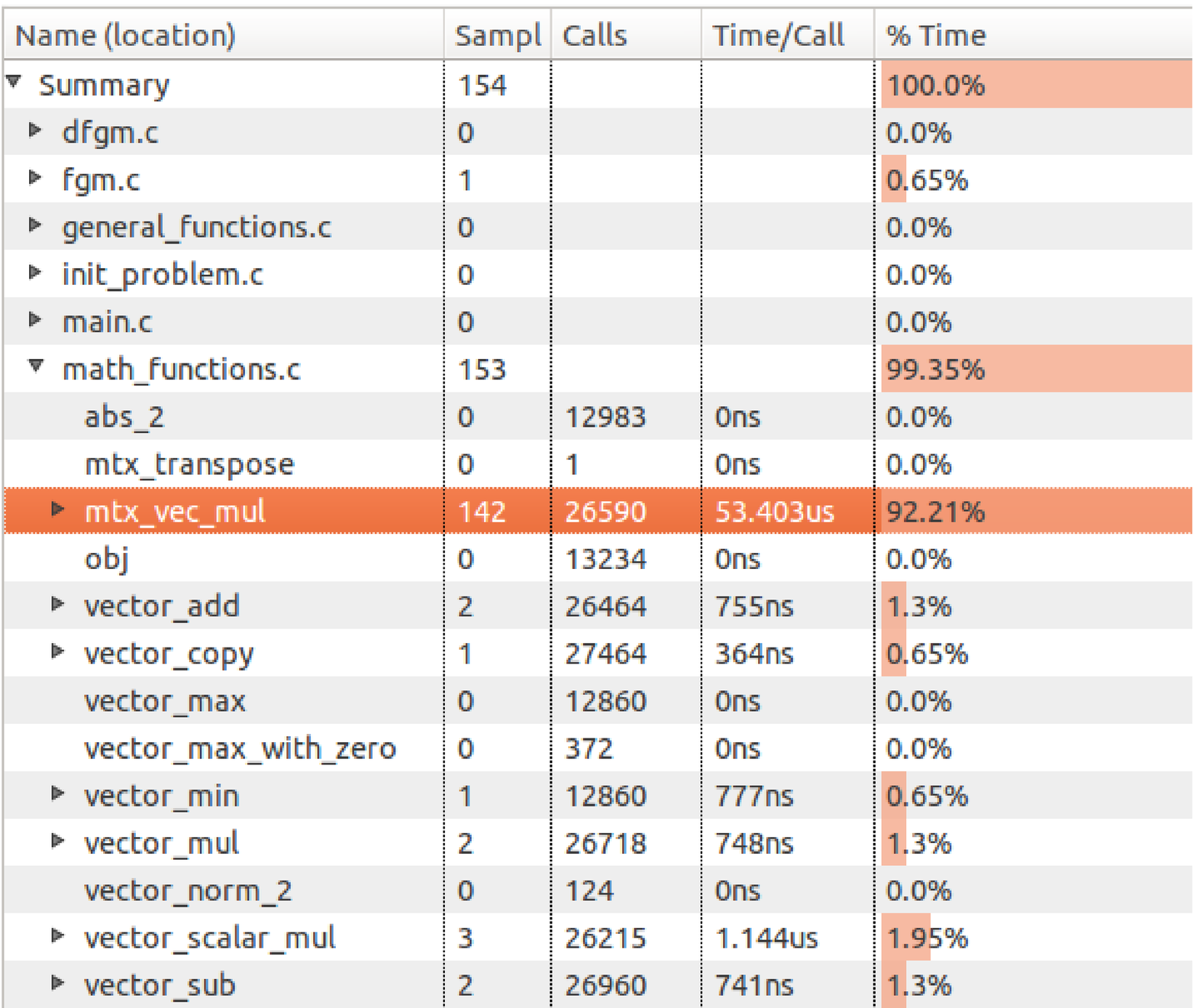}
\caption{Profiling the code with gprof.}
\label{fig:gprof_n150_dfgm_case1}
\end{figure}

\noindent In conclusion, in DuQuad the main operations are the
matrix-vector products. Therefore, DuQuad is adequate for solving QP
problems on hardware  with limited resources and capabilities, since
it does not require any solver for linear systems or other
complicating operations, while  most of the existing solvers for QPs
from the literature implementing e.g. active set or interior point
methods require the capability of solving linear systems. On the
other hand, DuQuad can be also used for solving large-scale sparse
QP problems since the iterations are very cheap in this case (only
sparse matrix-vector products).


\section{Numerical simulations}
\label{numerical_tests} DuQuad is mainly intended for  small to
medium size, dense QP problems, but it is of course also possible to
use DuQuad to solve (sparse) QP instances of large dimension.

\subsection{Distribution of DuQuad}

The DuQuad software package is available for download from:\\

\textcolor{blue}{http://acse.pub.ro/person/ion-necoara}\\

\noindent and distributed under general public license to allow
linking against proprietary codes. Proceed to the menu point
``Software'' to obtain a zipped archive of the most current version
of DuQuad. The users manual and extensive
source code documentation  are available here as well.\\

\noindent An overview of the workflow in DuQuad is illustrated in
Fig. \ref{fig:duquad_workflow}. A QP problem is constructed using a
Matlab script called \textit{test.m}. Then, the function
\textit{duquad.m} is called with the problem as input and is
regarded as a prepossessing stage for the online optimization. The
binary MEX file is called, with the original problem and the extra
info as input. The \textit{main.c} file of the C-code includes the
MEX framework and is able to convert the MATLAB data into C format.
Furthermore, the converted data gets bundled into a C struct and
passed as input to the algorithm that solves the~problem.
\begin{figure}[h!]
    \centering
    \includegraphics[width=0.5\textwidth]{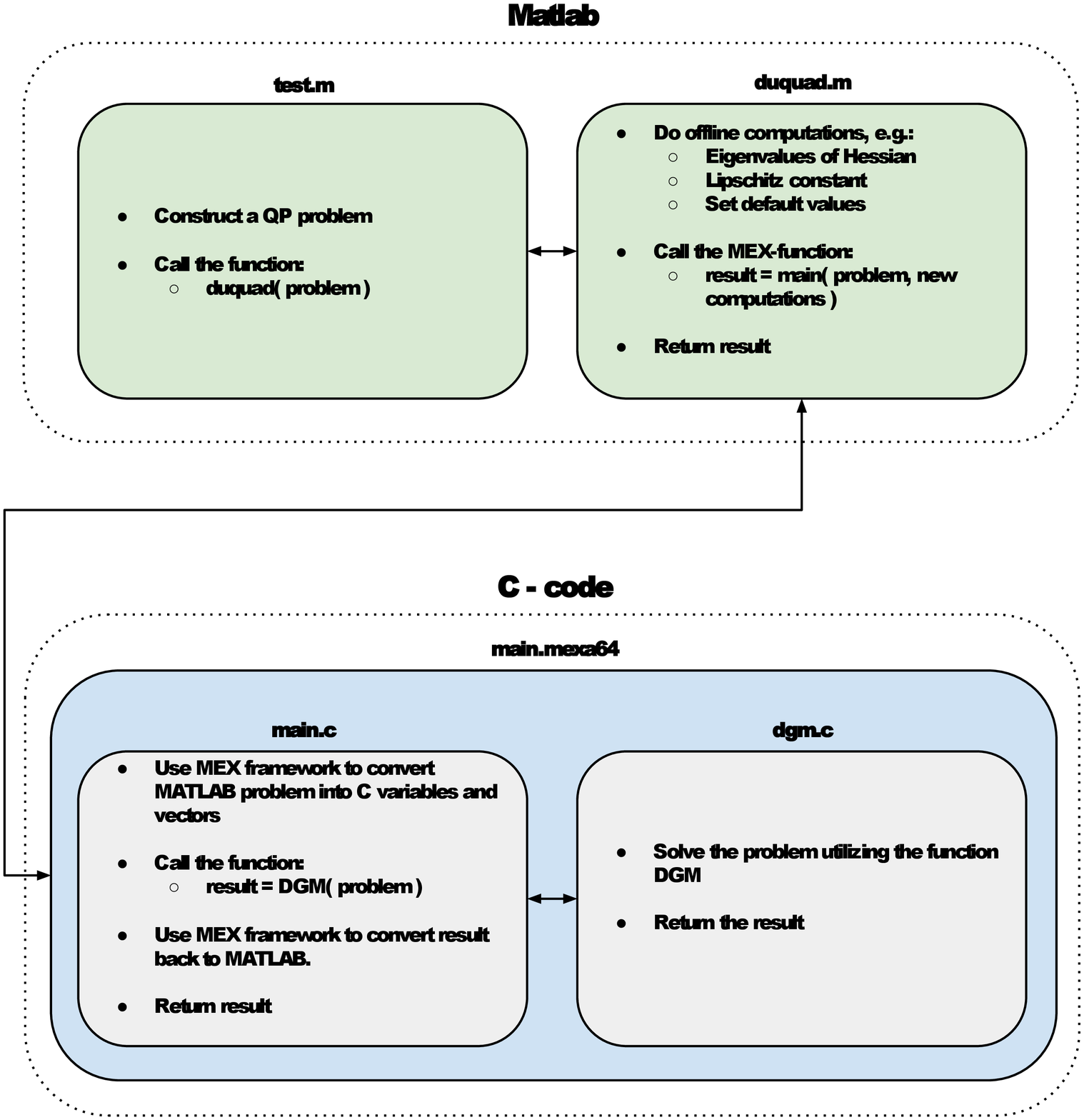}
    \caption{DuQuad workflow.}
    \label{fig:duquad_workflow}
\end{figure}




\subsection{Numerical tests: case $\calK=\{0\}$}
\noindent We plot in Fig. \ref{fig:qp_cpu} the average CPU time for
several solvers, obtained by solving $50$ random QP's with equality
constraints ($Q \succeq 0$ and $\calK=\{0\}$) for each
dimension $n$, with an accuracy $\epsilon=0.01$ and the stopping
criteria $\abs{F(\hat{u}^k_\epsilon) - F^{*}}$ and $\norm{G
\hat{u}^k_\epsilon + g}$ less than the accuracy $\epsilon$. In
both algorithms \textbf{DGM} and \textbf{DFGM} we consider the
average of iterates $\hat{u}^k_\epsilon$. Since $Q \succeq
0$, we have chosen $\rho = {\cal O}(1/\epsilon)$. In the case of
Algorithm \textbf{DGM}, at each outer iteration the inner problem is
solved with accuracy $\ein = \epsilon$. For the Algorithm
\textbf{DFGM} we consider two scenarios: in the first one, the inner
problem is solved with accuracy $\ein = 0.001$, while in the second
one we use the theoretic inner accuracy \eqref{choose_ein}. We
observe a good behavior of Algorithm \textbf{DFGM},  comparable to
Cplex and Gurobi.
\begin{figure}[ht!]
\begin{center}
\vskip-0.1cm
\includegraphics[width=0.55\textwidth,height=5cm]{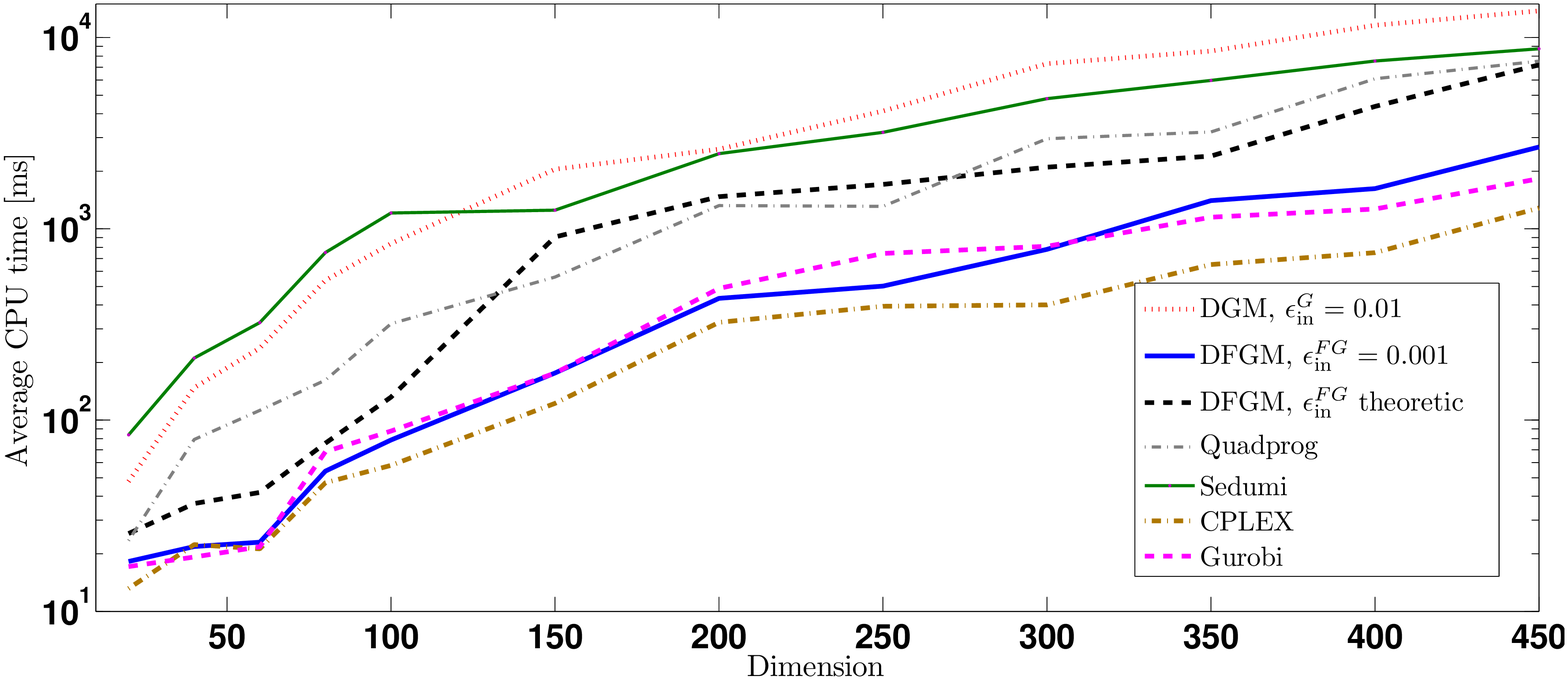}
\caption{Average CPU time (ms) for solving QPs, with $Q \succeq
0$ and $\calK=\{0\}$ of different dimensions, with several solvers.}
\label{fig:qp_cpu}
\end{center}
\end{figure}


\subsection{Numerical tests: case $\calK=\rset^p_{-}$}
\noindent We plot in Fig. \ref{fig:comparison_dfo} the number of
iterations of Algorithms \textbf{DGM} and  \textbf{DFGM} in the
primal last and  average iterates for $25$ random QPs with
inequality constraints ($Q \succ 0$ and $\calK=\rset^p_{-}$) of
variable dimension ranging from $n=10$ to $n = 500$. We choose the
accuracy $\epsilon=0.01$ and the stopping criteria was $\abs{F(u)
- F^{*}}$ and $\text{dist}_{\calK}(G u + g)$ less than the
accuracy $\epsilon$. From this figure we observe that the number of
iterations are not varying much for different test cases and also
that the number of iterations are mildly dependent on problem's
dimension. Finally, we  observe that dual first order  methods perform
usually better in the primal last iterate than in the average of primal  iterates.

\begin{figure}[ht!]
\begin{center}
\vskip-0.1cm
\includegraphics[width=1.05\textwidth,height=5cm]{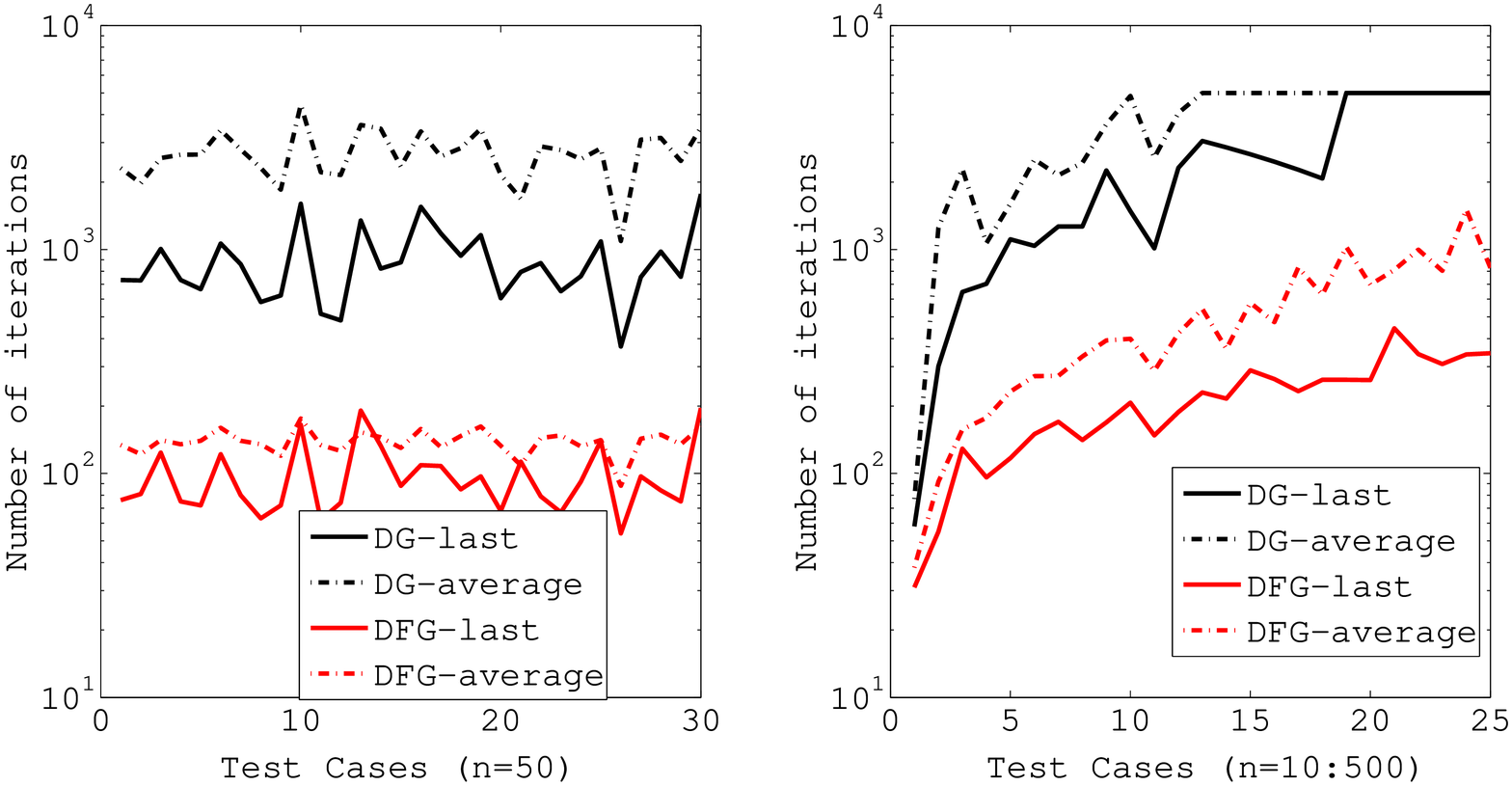}
\caption{Number of outer iterations on random QPs ($Q \succ 0$,
$\calK=\rset^p_{-}$) for  \textbf{DGM} and \textbf{DFGM} in primal last/average of iterates for different test cases of the same dimension (left) and  of variable dimension~(right).
} \label{fig:comparison_dfo}
\end{center}
\end{figure}



\begin{thebibliography}{30}

\bibitem{BarBie:06}
R. Bartlett and L. Biegler, {\em QPSchur: a dual, active set, Schur
complement method for large-scale and structured convex quadratic
programming algorithm}, Optimization and Engineering,  7:5-32, 2006.

\bibitem{BecTeb:09}
A.~Beck and M.~Teboulle, {\em A fast iterative
shrinkage-thresholding algorithm for linear inverse problems}, SIAM
Journal on Imaging Sciences, 2(1):183--202, 2009.

\bibitem{ConGou:92}
A. Conn, N. Gould and Ph. Toint, {\em LANCELOT: a Fortran package for large-scale
nonlinear optimization}, Springer Series in Computational Mathematics, Springer, 17, 1992.

\bibitem{DevGli:14}
O. Devolder, F. Glineur, Y. Nesterov,
{ \em First-order methods of smooth convex optimization with inexact oracle}
Mathematical Programming 146 (1-2):37--75, 2014.

\bibitem{DomZgr:12}
A. Domahidi, A. Zgraggen, M. Zeilinger, M. Morari and C. Jones, {\em
Efficient interior point methods for multistage problems arising in
receding horizon control},  IEEE Conference Decision and Control,
668--674, 2012.

\bibitem{FraWol:56}
M. Frank and P. Wolfe, {\em An algorithm for quadratic programming},
Naval Research Logistics Quarterly, 3(1-2):95--110, 1956.

\bibitem{Gis:14}
P. Giselsson, {\em Improved fast dual gradient methods
for embedded model predictive control}, Tech. rep.,   2014.

\bibitem{GonGro:07}
J. Gondzio and A. Grothey, {\em Parallel interior point solver  for
structured quadratic programs: application to financial planning
problems}, Annals of Operations Research, 152(1):319--339, 2007.

\bibitem{FerKir:14}
H. Ferreau, C. Kirches, A. Potschka, H. Bock and M. Diehl, {qpOASES:
a  parametric active-set algorithm for quadratic programming},
Mathematical Programming Computation, 2014.

\bibitem{FraVuk:14}
J. Frasch, M. Vukov, H. Ferreau and M. Diehl, {\em A new quadratic
programming strategy for efficient sparsity exploitation in SQP-based
nonlinear MPC and MHE}, IFAC World Congress,  2014.

\bibitem{JerLin:12}
J. Jerez, K. Ling, G. Constantinides and E. Kerrigan, {\em Model
predictive control for deeply pipelined field-programmable gate
array implementation: algorithms and circuitry}, IET Control Theory
and Applications, 6(8):1029--1041, 2012.

\bibitem{KvaGri:04}
M. Kvasnica, P. Grieder, M. Baoti and M. Morari, {\em Multi
parametric toolbox (MPT)}, in Hybrid Systems: Computation and
Control (R. Alur et al. Eds.), Springer, 2993:448--462, 2004.

\bibitem{LanMon:08}
G.~Lan and R.D.C. Monteiro, {\em Iteration-complexity of first-order
augmented lagrangian methods for convex programming}, Mathematical
Programming, 2015.

\bibitem{MatBoy:09}
J. Mattingley and S. Boyd, {\em Automatic code generation for
real-time  convex optimization}, in Convex Optimization in Signal
Processing and Communications,  Cambridge University Press, 2009.

\bibitem{NecNed:13}
I.~Necoara and V.~Nedelcu, {\em Rate analysis of inexact dual first
order  methods: application to dual decomposition}, IEEE
Transactions on Automatic Control, 59(5):1232--1243, 2014.

\bibitem{NedNec:12}
V.~Nedelcu, I.~Necoara and Q.~Tran Dinh, {\em Computational
complexity  of inexact gradient augmented lagrangian methods:
application to constrained MPC}, SIAM Journal  Control and
Optimization, 52(5):3109--3134, 2014.

\bibitem{NecNed:15}
I. Necoara and V. Nedelcu, {\em On linear convergence of a distributed dual gradient algorithm for linearly constrained separable convex problems}, Automatica, 55(5):209-â216, 2015.

\bibitem{NecFer:15}
I. Necoara, L. Ferranti  and T. Keviczky, {\em An adaptive
constraint  tightening approach to linear MPC based on approximation
algorithms for optimization}, Optimal Control Applications and
Methods, DOI: 10.1002/oca.2121, 1-19,  2015.

\bibitem{NecPat:15}
I. Necoara  and  A. Patrascu, {\em Iteration complexity analysis of
dual  first order methods for conic convex programming}, Tech. rep.,
Univ. Politehnica Bucharest, 1-35, 2014 (www.arxiv.org).

\bibitem{NedOzd:09}
A.~Nedic and A.~Ozdaglar, {\em Approximate primal solutions and rate
analysis for dual subgradient methods},  SIAM Journal on
Optimization, 19(4):1757--1780, 2009.

\bibitem{Nes:04}
Y.~Nesterov, {\em Introductory Lectures on Convex Optimization: A Basic Course},
Kluwer, Boston, 2004.

\bibitem{Nes:13}
Y. Nesterov, {\em Gradient methods for minimizing composite functions},
Mathematical Programming, 140(1):125--161, 2013.

\bibitem{PatBem:12}
P.~Patrinos and A.~Bemporad, {\em An accelerated dual
gradient-projection  algorithm for embedded linear model predictive
control}, IEEE Transactions on Automatic Control, 59(1):18--33,
2014.

\bibitem{RicMor:11}
S. Richter, M. Morari and C. N. Jones, {\em Towards computational
complexity certification for constrained MPC based on lagrange
relaxation and the fast gradient method},  IEEE Conference Decision
and Control,   5223--5229, 2011.

\bibitem{Roc:76}
R.T. Rockafellar, {\em Augmented Lagrangian and applications of the proximal point algorithm in convex programming}, Mathematics Operation Research,
1:97-â116, 1976.

\bibitem{StaSzu:14}
G. Stathopoulos, A. Szucs, Y. Pu and C. Jones, {\em Splitting methods
in control}, European Control Conference,  2014.

\bibitem{Tse:08}
P.~Tseng, {\em On accelerated proximal gradient methods for convex-concave
optimization},  SIAM Journal of Optimization, submitted:1--20, 2008.

\bibitem{Ulm:11}
F. Ullmann, {\em FiOrdOs: a matlab toolbox for c-code generation for
first order methods}, Master thesis, ETH Zurich, 2011.

\bibitem{WanLin:13}
P.W. Wang and C.J. Lin, {\em Iteration complexity of feasible
descent methods  for convex optimization}, Journal of Machine
Learning Research, 15:1523--1548, 2014.

\bibitem{WeiOzd:10}
E.~Wei, A.~Ozdaglar, and A.~Jadbabaie, {\em A distributed Newton
method for network utility maximization--Part I and II}, IEEE
Transactions on Automatic Control, 58(9), 2013.

\bibitem{ZimMur:11}
R.D. Zimmerman, C.E. Murillo-Sanchez and R.J. Thomas, {\em Matpower:
steady-state  operations, planning, and analysis tools for power
systems research and education}, IEEE Transactions on Power Systems,
26(1):12--19, 2011.
\end{thebibliography}
\end{document}